\pgfplotsset{compat=newest} 
\pgfplotsset{plot coordinates/math parser=false} 
\newtheorem{theo}{Theorem}
\newtheorem{prop}{Proposition}
\newtheorem{cor}{Corollary}
\newtheorem{mydef}{Definition}
\newtheorem{assump}{Assumption}
\newcommand{\ds}{\displaystyle}
\newcommand{\Pg}{\mathcal{P}}
\newcommand{\Qg}{\mathcal{Q}}
\newcommand{\Rs}{\mathcal{R}}
\newcommand{\Os}{\mathcal{O}}
\newcommand{\sspan}[1]{\ensuremath{\mathop{\mathrm{span}}\left( #1 \right)}}
\date{}
\title{New Gramians for Linear Switched Systems: Reachability, Observability, and Model Reduction}
\author[1]{Igor Pontes Duff}
\author[1]{Sara Grundel}
\author[1,2]{Peter Benner}
\affil[1]{Max Planck Institute for Dynamics of Complex Technical Systems, Sandtorstr. 1, 39106 Magdeburg, email: \{pontes,~grundel,~benner\}@mpi- magdeburg.mpg.de.}
\affil[2]{Technische Universit\"at Chemnitz, Faculty of Mathematics, Reichenhainer Straße 41, 09126 Chemnitz, Germany}
\begin{document}

\maketitle
\thispagestyle{empty}
\pagestyle{empty}

%%%%%%%%%%%%%%%%%%%%%%%%%%%%%%%%%%%%%%%%%%%%%%%%%%%%%%%%%%%%%%%%%%%%%%%%%%%%%%%%
\begin{abstract}
In this paper, we propose new algebraic Gramians for continuous-time linear switched systems, which satisfy generalized Lyapunov equations. The main contribution of this work is twofold. First, we show that the ranges of those Gramians encode the reachability and observability spaces of a linear switched  system. As a consequence, a simple Gramian-based criterion for reachability and observability is established. Second, a balancing-based model order reduction technique is proposed and, under some sufficient conditions,  stability preservation and an error bound are shown. Finally,  the efficiency of the proposed method is illustrated by means of numerical examples.
\end{abstract}

%\begin{keywords}
%Model approximation; switched systems; balanced truncation; reachability and observability
%\end{keywords}

%%%%%%%%%%%%%%%%%%%%%%%%%%%%%%%%%%%%%%%%%%%%%%%%%%%%%%%%%%%%%%%%%%%%%%%%%%%%%%%%
\section{Introduction}
%\subsection{Context}
 
 We consider a continuous-time linear switched system (see \cite{sun2006switched, liberzon2012switching}) (abbreviated by LSS) given by  \begin{equation}\label{eq:LSS}
\Sigma_{LSS} : \left\{\begin{array}{l}
\dot{x}(t) = A_{q(t)}x(t)+B_{q(t)}u(t), \hfill x(0) = x_0,\\ y(t)=C_{q(t)}x(t), 
\end{array}\right. 
\end{equation}
where $\Omega = \{1, \dots, M\}$  is the set of different modes of $\Sigma_{LSS}$, $x(t) \in \R^n$ is the state, $u(t) \in \R^m$ is the controlled input, $y(t)\in \R^{p}$ is the measured output and $q(t)$ is the switching signal, \emph{i.e.}, a piecewise constant function taking values from the index set $\Omega$. The system matrices $A_j \in \R^{n \times n}$, $B_j \in \R^{n \times m}$ and $C_j\in\R^{p \times n}$, where $j \in \Omega$, correspond to the linear system active in mode $q$, and $x_0$ is the initial state.   Furthermore, let $x(t) = \phi(t,x_0,u,q)$  denote the state trajectory at time $t$ of an LSS  initialized at $x(0) = x_0\in \R^n$, with input $u$ and switching signal $q$. In what follows, we assume zero initial condition, \emph{i.e.},  $x(0) =0$ in \eqref{eq:LSS} and, for $j \in \Omega$, the matrices $A_j$ are Hurwitz. %\red{Many applications for LSS, see \cite{} \cite{}}.
 When these models are of large-scale, modern analysis, simulation and optimization tools become drastically inefficient and thus, model order reduction (MOR) may become necessary. 

In the context of linear time-invariant systems, several model reduction approaches  have been efficiently developed since the 1960s (see the monograph \cite{morAnt05} and the recent surveys \cite{morBauBF14,morAntBG10}). However, reliable MOR techniques  for switched systems have been only studied in recent years. For discrete-time  linear switched systems see for instance \cite{bacstuug2016reachability}  for reachability and observability reduction with constrained switching,  \cite{zhang2008h, birouche2011model, birouche2012model} for $\mathcal{H}_{\infty}$-type reduction, and \cite{shaker2012model, gao2006model, birouche2010gramian} for balancing-based methods. For continuous-time  linear switched systems, see \cite{bastug2014model, bacstuug2016model, gosea2017data} for a class of moment matching methods, \cite{shaker2011generalised,monshizadeh2012simultaneous, petreczky2012theoretical} for  balancing-based methods and \cite{schulze2018model} for model reduction of systems affected by a low-rank switching. Also, \cite{petreczky2013balanced} presents a theoretical analysis of the techniques proposed in \cite{shaker2011generalised} and \cite{shaker2012model} for continuous- and discrete-time LSS. 

Besides \cite{monshizadeh2012simultaneous}, all of the balancing-based methods rely on Gramains satisfying Linear Matrix Inequalities (LMIs). Although LMIs provide a very flexible tool in control theory, they are costly to solve numerically  in the large-scale setting. To overcome this,  the current paper aims at providing new algebraic Gramians for LSS, denoted by $\Pg$ and $\Qg$, respectively, which satisfy generalized Lyapunov equations. These Gramians are inspired by bilinear model reduction techniques, in which the generalized Lyapunov equation plays an important role, \emph{e.g.}, (see\cite{morBenD11}).  
In addition, we prove that $\Pg$ and $\Qg$ encode the reachability and observability spaces of an LSS, and their kernels correspond to the uncontrollable and unobservable spaces.  

Once the proposed Gramians are computed, by means of a square root balancing approach (see \cite{morAnt05}) and for a given state space dimension $r \ll n$,  we are able to construct two projection matrices $V,W \in \R^{n \times r}$ such that $W^T V = I_r$, which allows us to determine the reduced order LSS as
\begin{equation}\label{eq:LSSred}
\hat{\Sigma}_{LSS} : \left\{\begin{array}{l}
\dot{ \hat {x}}(t) = \hat A_{q(t)}\hat x(t)+\hat B_{q(t)}u(t),  \hfill \hat{x}(0) = 0,\\ \hat y(t)=\hat C_{q(t)} \hat x(t), 
\end{array}\right.
\end{equation}
where 
\begin{equation}
\hat{A}_j = W^TA_jV ,~\,~   \hat B_j = W^TB_j,~\,~\hat C_j= C_jV
\end{equation}
for $l \in \Omega$.   We call $W,V$ global projection matrices $V,W \in \R^{n \times r}$ because they are fixed for every mode $j \in \Omega$ (see \cite{morBenGW15} for a discussion of local and global projection techniques). Readers should refer to \cite{Gosea2017} for a balancing-type method where the projection matrices $V_j,W_j \in \R^{n \times r}$ might depend on the mode $j \in \Omega$, where the authors consider a more general realization then \eqref{eq:LSS}.

%\red{\noindent\textbf{Contribution.} In this paper, Gramians satisfying the generalized Lyapunov equation are proposed for LSS in Definition \ref{def:Ggram}. The main result of this work, stated in Theorem \ref{theo:GramCond}, shows that those Gramians encode the reachable and observable sets of  an LSS. As a consequence, Corollary \ref{cor:ReachObservCrit} gives a Gramian-based criteria for an LSS to be reachable and observable. Finally, those Gramians are used to construct global projectors $V, W$ allowing to obtain reduced order LSS. }
\newpage
\noindent\textbf{Outline.} The remaining parts of the paper are organized as follows. Section 2 an LSS is formulated as a bilinear system. Inspired by this transformation, Gramians for LSS are proposed. In Section 3, we prove that those Gramians encode the reachability and observability spaces. Also, we propose a Gramian-based criterion to determine if an LSS is reachable and observable.  In Section 4, the balanced truncation procedure based on these Gramians is introduced. Moreover,  under certain assumption (see \cite{petreczky2013balanced}), this procedure is shown to preserve quadratic stability and to have an error bound. Finally, numerical results are shown in Section 5,  and Section 6 concludes the paper.

\noindent\textbf{Notations.} We denote by $\mathbb{N}$ the set of natural numbers including 0. Let $A \in \R^{n \times m}$ and $B \in \R^{p \times q}$ be two real matrices, we denote $A\otimes B \in \R^{np \times mq}$ the corresponding Kronecker product between $A$ and $B$. 

\section{Bilinear formulation of an LSS and generalized Gramians}

\subsection{Bilinear realization}
In this section, we rewrite the equations of an LSS  to resemble a bilinear system. A very similar procedure was developed in \cite{morBenB11} in the context of parametric systems. 
 
To this aim, first, let us define the matrices
\begin{equation}\label{eq:BilinearMatrices} 
A = A_1~\,~D_j = A_j - A_1, ~\textnormal{for}~j=1,\dots, M.
\end{equation}
Notice, even if $D_1 = 0$, for simplicity, we are going to keep it in the equation. Now, let us replace the switching signal $q(t)$, which takes values in the mode set $\Omega$, by  $M$  switching indicators $\{q_1(t),\dots, q_M(t)\}$, taking binary values, \emph{i.e.}, $q_j(t)\in \{0,1\}$ such that $\ds\sum_{k=1}^M q_k(t) = 1$.  Therefore, 
\[ q(t) = k \Leftrightarrow q_k(t) = 1~ \textnormal{and}~q_j(t) =0~\textnormal{for}~j \neq k.\]
With this notion, the mode $k$ is active when $q_k(t)=1$ and $q_j = 0$ for $j \neq k$. The LSS from \eqref{eq:LSS} can be expressed as

\begin{equation}\label{eq:LSSbilinear}
\begin{array}{l}
\dot{x}(t) = Ax(t)+ \ds\sum_{j=1}^M q_j(t)D_jx(t) + q_j(t)B_{j}u(t), \\
y(t) = \ds\sum_{j=1}^Mq_j(t)C_jx(t).
\end{array}
\end{equation}
Let us include the switching indicators as additional inputs, \emph{i.e.},
\[ (\tilde{u}(t))^T = \begin{bmatrix}
u(t)^T &q_1(t) & \dots & q_M(t)
\end{bmatrix} \in \R^{1 \times (m+M)} \] and 
$\tilde{B}_j = \begin{bmatrix}
B_j & 0
\end{bmatrix} $ for $j \in \Omega$. Then,

\begin{equation}\label{eq:LSSbilinear2}
\begin{array}{l}
\dot{x}(t) = Ax(t)+ \ds\sum_{j=1}^{M} \tilde{u}_{j+m}(t)D_jx(t) + \tilde{u}_{j+m}(t) \tilde{B}_j \tilde{u}(t), \\
y(t) = \ds\sum_{j=1}^{M}\tilde{u}_{j+m}(t)C_jx(t)
\end{array}.
\end{equation}

The crucial observation is that the equations above are very similar to a bilinear system realization, which is usually given as 
\begin{equation}\label{eq:Bilinear}
\begin{array}{l}
\dot{x}(t) = Ax(t)+ \ds\sum_{j=1}^M u_j(t)N_jx(t) + Bu(t) \\ 
y(t) = Cx(t)
\end{array}.
\end{equation}
Hence, if  $B_j = B$ and $C_j = C$ for $j\in \Omega$,  then the realization of an LSS can be recast as a bilinear system. However, in the general case $B_j \neq B_k$ and $C_j \neq C_k$  for $j \neq k$.

In what follows,  we recall some results of model reduction of bilinear systems and, inspired by that, new Gramians for LSS are proposed.

\subsection{Generalized Gramians for LSS}

In the past  years, model reduction of bilinear systems has been studied in the literature, see \cite{morBenD11} for more details. %\red{\emph{e.g.}, \cite{rugh1981nonlinear}  \cite{phillips2003projection} \cite{breiten2010krylov} \cite{bai2006projection}, \cite{benner2012interpolation} \cite{flagg2015multipoint}}. 
A bilinear system as \eqref{eq:Bilinear} is associated to the reachability and observability Gramians
 \begin{subequations}\label{eq:BGram}
	\begin{align}
	&\Pg_{B} =\ds \sum_{k=1}^{\infty} \int_0^{\infty}\dots\int_0^{\infty} P_k(t_1,\dots ,t_k)P_k(t_1,\dots ,t_k)^Tdt_1 \dots dt_k,\label{eq:BGramReach} \\
	&\Qg_B =\ds \sum_{k=1}^{\infty} \int_0^{\infty} \dots \int_0^{\infty} Q_k(t_1,\dots ,t_k)Q_k(t_1,\dots ,t_k)^Tdt_1 \dots dt_k, \label{eq:BGramObserv} 
	\end{align}
\end{subequations}
respectively, where
%\begin{align*}
%&P_1(t_1) = e^{ At_1} B,\hspace{2cm} P_k(t_1, \dots, t_k) = e^{At_k}\begin{bmatrix}
% N_1P_{k-1} & \dots &  N_m P_{k-1} \end{bmatrix},  \\
%&Q_1(t_1) = e^{A^Tt_1} C^T, Q_k(t_1, \dots, t_k) = e^{ A^Tt_k}\begin{bmatrix}
% N_1^T Q_{k-1} & \dots &   N_m^T Q_{k-1}
%\end{bmatrix} .
%\end{align*}
\begin{align*}
&P_1(t_1) = e^{ At_1}  B,   \\
&Q_1(t_1) = e^{A^Tt_1} C^T, \hspace{3cm}  \\ 
&P_k(t_1, \dots, t_k) = e^{At_k}\begin{bmatrix}
N_1P_{k-1} & \dots &  N_M P_{k-1} \end{bmatrix},  \\
&Q_k(t_1, \dots, t_k) = e^{A^Tt_k}\begin{bmatrix}
N_1^T Q_{k-1} & \dots &  N_M^T Q_{k-1}
\end{bmatrix} .
\end{align*}

Moreover,  if the Gramians exist, \emph{i.e.} the infinite sums converge, they satisfy the following generalized Lyapunov equations

\begin{subequations}\label{eq:GenLyapBilinear}
\begin{align}
A\Pg_B + \Pg_B A^T + \ds\sum_{j=1}^{M} \bigg(N_j\Pg_B N_j^T\bigg) + BB^T &= 0, \\
A^T\Qg_B + \Qg_B A + \ds\sum_{j=1}^{M} \bigg(N_j^T\Qg_B N_j\bigg) + CC^T &= 0.
\end{align}
\end{subequations}
Those equations where proposed in \cite{hsu1983realization} and used to construct minimal realizations and model reduction techniques based on balanced truncation of bilinear systems, see \emph{e.g.}  \cite{al1993new,al1994new} and \cite{morZhaL02}. As mentioned before, the realization of an LSS is not equivalent to a bilinear realization because $B_k \neq B_j$ and $C_k \neq C_j$ for $j \neq k$. However, inspired by those expressions, we propose the following Gramians to be associated to a given LSS.
\begin{mydef}[Generalized Gramians for LSS]\label{def:Ggram} Given an LSS as in \eqref{eq:LSS} and the matrices $D_j$ defined in \eqref{eq:BilinearMatrices}. Then let $\Pg, \Qg$ be
\begin{subequations}\label{eq:LSGram}
	\begin{align}
	&\Pg =\ds \sum_{k=1}^{\infty} \int_0^{\infty}\dots\int_0^{\infty} P_k(t_1,\dots ,t_k)P_k(t_1,\dots ,t_k)^Tdt_1 \dots dt_k,\label{eq:BGramReach} \\
	&\Qg =\ds \sum_{k=1}^{\infty} \int_0^{\infty} \dots \int_0^{\infty} Q_k(t_1,\dots ,t_k)Q_k(t_1,\dots ,t_k)^Tdt_1 \dots dt_k, \label{eq:BGramObserv} 
	\end{align}
\end{subequations}	
	where
	\begin{align*}
	&P_1(t_1) = e^{ At_1} \begin{bmatrix}
	B_1 & \dots & B_M
	\end{bmatrix},    \\
	&Q_1(t_1) = e^{A^Tt_1} \begin{bmatrix} C^T_1 & \dots & C^T_M
	\end{bmatrix},  \hspace{3cm}  \\ 
	&P_k(t_1, \dots, t_k) = e^{At_k}\begin{bmatrix}
	D_1P_{k-1} & \dots &  D_M P_{k-1} \end{bmatrix},  \\
	&Q_k(t_1, \dots, t_k) = e^{A^Tt_k}\begin{bmatrix}
	D_1^T Q_{k-1} & \dots &  D_M^T Q_{k-1}
	\end{bmatrix}.
	\end{align*}
If they exist, $\Pg$ and $\Qg$ will be called the reachabillity and observability Gramians of the LSS.
\end{mydef}	
As a consequence, if $\Pg, \Qg $ exist, they are symmetric, positive semidefinite matrices  which satisfy the following generalized Lyapunov equations 
\begin{subequations}
	\begin{align}
A\Pg + \Pg A^T + \ds\sum_{j=1}^{M} \bigg(D_j\Pg D_j^T+  B_jB_j^T \bigg) &= 0, \label{eq:GenLyapLSSreach} \\
A^T\Qg + \Qg A + \ds\sum_{j=1}^{M} \bigg(D_j^T\Qg D_j +C_j^TC_j\bigg) &= 0.\label{eq:GenLyapLSSobsev}
\end{align}
\end{subequations}	

%\red{The terminology "Generalized Lyapunov equation" has also been used Reader should not be confused with other types of generalized Lyapunov  :  
%\begin{itemize}
%\item \[AY E+E^TY A^T+BB^T = 0 \] arising in the context of generalized state-space systems  \cite{penzl1998numerical}  
%\item \[ AX +XA^T+BB^T \leq 0 \] arising in the context of linear matrix inequalities see \cite{dullerud2013course} 
%\end{itemize}
%}

Note that the name "Gamians" will be justified in the following. The LSS Gramians can be computed using the Kronecker product, \emph{i.e.}, let 
\[\mathcal{M} = \bigg(A\otimes I_n + I_n\otimes A +\sum_{j=1}^M D_j \otimes D_j \bigg)\in \R^{n^2 \times n^2},  \]
\[ \mathcal{B} = \vecop{\sum_{k=1}^{M} B_j B_j^T } ~\,~\textnormal{and}~\,~\mathcal{C} =\vecop{\sum_{j=1}^{M} C_j^TC_j }. \]
Then, the generalized reachability and  observability Gramians are given by
\[ \vecop{\Pg} = -\mathcal{M}^{-1}\mathcal{B}~\,~\textnormal{and}~\,~ \vecop{\Qg}  = -\mathcal{M}^{-T} \mathcal{C}.  \]

%\[ \vecop{\Qg} = -\bigg(A^T\otimes I_n + I_n\otimes A^T +\sum_{k=1}^M N_k^T \otimes N_k^T \bigg)^{-1} \]

 However, in this Kronecker form, the solution of the generalized Lyapunov equation is determined by solving as a set of $n(n + 1)/2$ equations in $n(n + 1)/2$ variables, whose cost is  $O(n^6)$ operations. Fortunately,  new efficient methodologies have been developed recently to determine low-rank solutions of these generalized Lyapunov equations (see \cite{damm2008direct},   \cite{benner2013low},  \cite{shank2016efficient} and \cite{jarlebring2017krylov}) which are suitable in the large-scale setting. 

The following theorem, from \cite{morZhaL02}, states a sufficient condition for  existence and uniqueness of $\Pg$ and $\Qg$.

%$\dot{x} = Ax$ is stable if and only if there exist a real scalars 

\begin{theo}[Sufficient conditions for existence and uniqueness \cite{morZhaL02}, Theorem 2]\label{theo:GenLyap} Let $A$, $D_j$, $B_j$ and $C_j$ given by the notation about. In addition, suppose that $A$ is Hurwitz.  Then, there exist real scalars $\beta>0$ and $0<\alpha \leq -\max_i(Re(\lambda_i(A)))$ such that 
	\[\|e^{At}\| \leq \beta e^{-\alpha t}. \]
Then, the reachabillity and observability Gramians satisfying \eqref{eq:GenLyapLSSreach} and \eqref{eq:GenLyapLSSobsev}  exist if 
\[  \left\| \sum_{j=1}^M D_jD_j^T\right\|  < \dfrac{2\alpha}{\beta^2}.\]
\end{theo}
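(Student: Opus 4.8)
The plan is to reduce existence to the summability of a geometric-type series and to read off uniqueness from the invertibility of the associated Lyapunov operator. I treat $\Pg$; the argument for $\Qg$ is identical after replacing $A, D_j, B_j$ by $A^T, D_j^T, C_j^T$. I introduce two linear operators on $\R^{n \times n}$: the stable solution operator $\mathcal{L}(Z) = \int_0^\infty e^{At} Z e^{A^T t}\,dt$, which is well defined because $A$ is Hurwitz and which inverts $X \mapsto -(AX+XA^T)$, and the spreading operator $\Phi(Z) = \sum_{j=1}^M D_j Z D_j^T$. Writing $\Pg_k$ for the $k$-th summand of \eqref{eq:LSGram}, the block structure of $P_k$ collapses $P_k P_k^T$ into $e^{At_k}\big(\sum_{j=1}^M D_j P_{k-1}P_{k-1}^T D_j^T\big)e^{A^T t_k}$. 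Integrating first in $t_1,\dots,t_{k-1}$ and then in $t_k$ (justified by Tonelli, since every integrand is positive semidefinite) yields the recursion $\Pg_k = \mathcal{L}(\Phi(\Pg_{k-1}))$ with $\Pg_1 = \mathcal{L}\big(\sum_j B_j B_j^T\big)$. Hence $\Pg_k = (\mathcal{L}\Phi)^{k-1}\Pg_1$, so the Gramian is the formal Neumann series $\sum_{k \ge 1}(\mathcal{L}\Phi)^{k-1}\Pg_1$.

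Next I would quantify the contraction in the spectral norm. From $\|e^{At}\| \le \beta e^{-\alpha t}$ one gets $\|\mathcal{L}(Z)\| \le \int_0^\infty \beta^2 e^{-2\alpha t}\,dt\,\|Z\| = \tfrac{\beta^2}{2\alpha}\|Z\|$ for every $Z$. For $\Phi$, writing $\|\Phi(Z)\| = \sup_{\|u\|=\|v\|=1}\big|\sum_j (D_j^T u)^T Z (D_j^T v)\big|$ and applying Cauchy--Schwarz with $\sum_j \|D_j^T u\|^2 = u^T\big(\sum_j D_j D_j^T\big)u \le \|\sum_j D_j D_j^T\|$ gives $\|\Phi(Z)\| \le \|\sum_j D_j D_j^T\|\,\|Z\|$. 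Consequently $\mathcal{L}\Phi$ has operator norm at most $\gamma := \tfrac{\beta^2}{2\alpha}\|\sum_j D_j D_j^T\|$, and the hypothesis $\|\sum_j D_j D_j^T\| < 2\alpha/\beta^2$ is precisely $\gamma < 1$.

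With $\gamma < 1$ existence is immediate: $\|\Pg_k\| \le \gamma^{k-1}\|\Pg_1\|$, so the partial sums of \eqref{eq:LSGram} converge absolutely, and their limit $\Pg$ is symmetric positive semidefinite as a norm limit of positive semidefinite matrices. For uniqueness I would observe that $\gamma < 1$ makes $\mathrm{id} - \mathcal{L}\Phi$ invertible through its Neumann series. Applying $\mathcal{L}$ to the generalized Lyapunov equation \eqref{eq:GenLyapLSSreach} turns it into the fixed-point form $X = \mathcal{L}\Phi(X) + \mathcal{L}\big(\sum_j B_j B_j^T\big)$, i.e. $(\mathrm{id}-\mathcal{L}\Phi)X = \Pg_1$, whose unique solution is exactly $\sum_{k \ge 1}(\mathcal{L}\Phi)^{k-1}\Pg_1 = \Pg$. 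Equivalently, $\|\mathcal{L}\Phi\| < 1$ forces the Kronecker matrix $\mathcal{M}$ to be nonsingular, so \eqref{eq:GenLyapLSSreach} admits a unique solution.

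I expect the main obstacle to be the operator-norm estimate for $\Phi$: the naive bound $\sum_j \|D_j\|^2$ is too weak to reach the stated threshold $2\alpha/\beta^2$, and recovering the sharp constant $\|\sum_j D_j D_j^T\|$ requires either the Cauchy--Schwarz step above or, alternatively, the positive-semidefinite monotonicity $D_j Z D_j^T \preceq \|Z\|\,D_j D_j^T$ applied to the PSD iterates $\Pg_{k-1}$. A secondary technical point is justifying the collapse of the $k$-fold integral into the recursion, which is harmless here since all integrands are positive semidefinite and $\gamma < 1$ guarantees absolute convergence.
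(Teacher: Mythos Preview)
The paper does not prove this theorem; it is quoted verbatim as Theorem~2 of \cite{morZhaL02} and the subsequent decomposition $\Pg=\sum_{k\ge1}\Pg_k$ into Lyapunov iterates is likewise attributed to that reference without argument. So there is no in-paper proof to compare against.

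Your proposal is a correct and complete proof, and it is in fact the standard argument one would expect from \cite{morZhaL02}: rewrite the series \eqref{eq:LSGram} as the Neumann series of $\mathcal{L}\Phi$ acting on $\Pg_1$, bound $\|\mathcal{L}\|\le\beta^2/(2\alpha)$ from the exponential decay, bound $\|\Phi\|\le\|\sum_j D_jD_j^T\|$, and conclude absolute convergence and uniqueness from $\gamma<1$. Your Cauchy--Schwarz computation for the $\Phi$ bound is valid; the PSD-monotonicity alternative you mention ($D_jZD_j^T\preceq\|Z\|D_jD_j^T$ for $Z\succeq0$) is slightly cleaner here since every $\Pg_{k-1}$ is PSD, but both reach the sharp constant. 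The only cosmetic point is that the theorem as stated asserts existence, not uniqueness, so your uniqueness paragraph is a bonus rather than a requirement.
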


%In particular we note that possesses a positive definite solution P if A is stable, and the norms of the $D_j$ are sufficiently small.

Furthermore, under the conditions of Theorem \ref{theo:GenLyap}, the  symmetric positive semidefinite solutions $\Pg$ and $\Qg$ of equation \eqref{eq:GenLyapLSSreach} can be expressed as an infinite sum of symmetric positive semidefinite matrices $\Pg_k$ and $\Qg_k$ (see \cite{morZhaL02} for more details), \emph{i.e.},
 \[\Pg = \ds \sum_{k=1}^{\infty} \Pg_k~\textnormal{and}~\, \Qg = \ds \sum_{k=1}^{\infty} \Qg_k  \] where
\[ \begin{array}{rcl}
A\Pg_1 + \Pg_1 A^T + \ds\sum_{j=1}^M B_jB_j^T &=& 0, \\ 
A^T\Qg_1 + \Qg_1 A + \ds\sum_{j=1}^M C_j^TC_j &=& 0, 
\end{array}  \]
and
\[ \begin{array}{rcl}
A\Pg_k + \Pg_k A^T + \ds \sum_{j=1}^M D_j\Pg_{k-1} D_j^T &=& 0, \\
 A^T\Qg_k + \Qg_k A + \ds \sum_{j=1}^M D_j^T\Qg_{k-1} D_j &=& 0. 
\end{array} \]

From here on, we assume the  existence and uniqueness of positive semidefinite solutions to \eqref{eq:GenLyapLSSreach} and \eqref{eq:GenLyapLSSobsev} and that the conditions of Theorem \ref{theo:GramCond} hold. In what follows, we show that the proposed Gramians encodes the reachability and observability sets of an LSS. 

\section{Gramians and reachability and observability sets}
%Let $\Pg$ and $\Qg$ symmetric semidefinite positive matrices which satisfy
%\begin{equation}
%\begin{array}{r}
%A\Pg + \Pg A^T + \ds\sum_{k=1}^{m} \bigg(D_k\Pg D_k^T + B_kB_k^T\bigg) = 0 \\
%A^T\Qg + \Qg A + \ds\sum_{k=1}^{m} \bigg(D_k^T\Qg D_k + C_k^TC_k\bigg) = 0
%\end{array}
%\end{equation}

%In the context of a LTI system 
%\[ \dot{x}(t) = Ax(t) +Bu(t),~\,~y(t) = Cx(t), \]
%if $A$ is Hurwitz, then the unique solution of 
%\[ A\Pg +\Pg A^T+BB^T =0 ~\textnormal{and}~A^T\Qg +\Qg A+C^TC =0   \]
%and $\range \Pg$ and $\range \Qg$ give as the reachability and obsevability sets of $\Sigma_{LTI}$ respectively. 

As previously mentioned, the main goal of this section is to show  that the LSS Gramians encode the reachability and observability sets of an LSS. 

First of all, let us recall the definition and properties of those sets in the context of LSS. The reader should refer to \cite{sun2002controllability} and \cite{sun2006switched}  for more details. Let us start with the notion of reachability and observability sets.  

\begin{mydef}[Reachable set] A state  $x \in \R^n$ is reachable, if there exist a time instant $t_f$, a switching signal $q : [0,t_f] \rightarrow \Omega $, and an input $u:  [0,t_f] \rightarrow \R^p $, such that  $\phi(t_f,0,u,q) = x$. The \emph{reachable set} of  an LSS is denoted by $\Rs$, that is the set of states which are reachable.
\end{mydef}
\begin{mydef}[Observability set] A state $x$ is said to be unobservable, if for any switching signal $q$, there exists an input $u(t)$ such that
\[C_{q(t)}  \phi(t,x,u,q) = C_{q(t)}  \phi(t,0,u,q), ~\forall t \geq 0.   \]
The unobservable set of an LSS, denoted by $\mathcal{U} \mathcal{O}$, is the set of states which are unobservable. The \emph{observable set} of an LSS,   denoted by $\mathcal{O}$, is defined by $\mathcal{O} = (\mathcal{U} \mathcal{O})^\perp$.
\end{mydef}

In what follows, we recall the algebraic characterization of $\Rs$ and $\Os$ and we state the main result of this paper, \emph{i.e.}, the Gramian version of this result.

\subsection{Characterization of the reachability and observability sets}

The following result,  from \cite{sun2006switched},  describes the reachable and observable sets of an LSS  by algebraic conditions.

\begin{theo}[Algebraic conditions \cite{sun2006switched},Theorem $4.17$]\label{theo:AlgCond} For an LSS as in \eqref{eq:LSS}, the reachable and observable sets $\Rs$ and $\Os$ are linear subspaces of $\R^n$ given by 
	\[ \Rs = \sum_{k=1}^{\infty}\left(\sum_{ \substack{
			  i_0, \dots, i_{k} \in \Omega \\
			j_1, \dots, j_{k}\in \mathbb{N}}}  A_{i_{k}}^{j_{k}}\dots A_{i_1}^{j_1}\range{B_{i_0}}\right),  \]
	and
	\[ \Os = \sum_{k=1}^{\infty}\left(\sum_{\substack{i_0, \dots, i_{k} \in \Omega \\ j_1, \dots, j_{k}\in \mathbb{N}}} (A_{i_{k}}^{j_{k}})^T\dots (A_{i_1}^{j_1})^T \range{C_{i_0}^T}\right). \]
\end{theo}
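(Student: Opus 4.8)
The plan is to prove the reachability characterization first and then obtain the observability one by duality. Write $V$ for the subspace on the right-hand side of the formula for $\Rs$. I would begin by recording two elementary facts. The \emph{flow/concatenation identity}: for a piecewise-constant switching signal equal to $\sigma_1,\dots,\sigma_N$ on successive intervals of lengths $\tau_1,\dots,\tau_N$, variation of constants gives
\[
\phi(t_f,0,u,q) = \sum_{\ell=1}^{N} e^{A_{\sigma_N}\tau_N}\cdots e^{A_{\sigma_{\ell+1}}\tau_{\ell+1}} \int_0^{\tau_\ell} e^{A_{\sigma_\ell}(\tau_\ell - s)} B_{\sigma_\ell}\, u_\ell(s)\, ds,
\]
with $u_\ell$ the restriction of $u$ to the $\ell$-th interval, so every reachable state is a finite sum of flows of vectors drawn from the single-mode controllability subspaces $\mathcal{C}_i := \sspan{B_i,\, A_i B_i,\, A_i^2 B_i,\, \dots}$. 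The \emph{invariant-hull identity}: for any matrix $A$ and subspace $W$ one has $\sspan{e^{A t} w : t \ge 0,\ w \in W} = \sum_{j \ge 0} A^j W$, the smallest $A$-invariant subspace containing $W$; this is immediate from the power-series expansion of $e^{At}$ together with the Cayley–Hamilton theorem.

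The inclusion $\Rs \subseteq V$ is the easy direction. I would observe that $V$ contains each $\range{B_i}$ (take $k=1$, $j_1=0$) and is invariant under every $A_i$, since left-multiplying a generator $A_{i_k}^{j_k}\cdots A_{i_1}^{j_1}\range{B_{i_0}}$ by $A_i$ yields another generator with one additional index; hence $V$ is invariant under every $e^{A_i t}$. Consequently each $\mathcal{C}_i \subseteq V$, and by the concatenation identity every reachable state lies in $V$.

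For the reverse inclusion I would build the generators of $V$ back up from admissible trajectories. Injecting control in mode $i_0$ realizes $\mathcal{C}_{i_0} \supseteq \range{B_{i_0}}$, while switching to mode $i$ and running with zero input for time $\tau$ applies $e^{A_i\tau}$ to the current state, so $e^{A_i\tau}\Rs \subseteq \Rs$ for all $\tau\ge 0$. Iterating these two operations reaches, after $k$ switches, every vector of the form $e^{A_{i_k}\tau_k}\cdots e^{A_{i_1}\tau_1} w$ with $w\in\mathcal{C}_{i_0}$. Passing to the span over the durations $\tau_1,\dots,\tau_k$ and applying the invariant-hull identity at each stage converts the exponentials into the powers $A_{i_k}^{j_k}\cdots A_{i_1}^{j_1}$ appearing in $V$, which yields $V\subseteq \Rs$ once we know that $\Rs$ is a subspace.

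Establishing that $\Rs$ is genuinely a linear subspace is the step I expect to be the real obstacle. Closure under scalar multiplication is immediate from linearity in $u$ for fixed $q$, but closure under addition is delicate: concatenating the signals reaching $x$ and $y$ only produces $x + e^{A_{\sigma_N}\tau_N}\cdots e^{A_{\sigma_1}\tau_1}\,y$, i.e.\ a \emph{flowed} copy of $y$ rather than $x+y$. My plan is to exploit finite-dimensionality: each fixed pair $(t_f,q)$ gives a reachable set $\Rs(t_f,q)$ that is a subspace, being the image of the linear input-to-state map, and the concatenation identity shows this family of subspaces of $\R^n$ is directed; since $\R^n$ is finite-dimensional the increasing chain stabilizes, so $\Rs=\Rs(t_f,q^\star)$ for a single signal $q^\star$ cycling sufficiently many times through all modes. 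As the image of a linear map this set is a subspace, which legitimizes the span arguments above and closes the reachability proof. Finally, the observability formula follows by duality: the unobservable set of \eqref{eq:LSS} is the orthogonal complement of the reachable set of the adjoint LSS obtained by replacing $A_i, B_i$ with $A_i^T, C_i^T$, so $\Os=(\mathcal{U}\mathcal{O})^\perp$ equals that reachable set, and the reachability result applied to the adjoint reproduces the stated expression verbatim.
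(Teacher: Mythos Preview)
The paper does not prove this theorem; it is quoted from \cite{sun2006switched} as background for the Gramian characterization (Theorem~\ref{theo:GramCond}). Your outline is essentially the standard argument from that reference: the inclusion $\Rs\subseteq V$ via the variation-of-constants formula and $A_i$-invariance of $V$ is correct, the reverse inclusion assuming $\Rs$ is a subspace is correct, and the duality reduction for $\Os$ is correct.

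The one genuine soft spot is the subspace step, where your phrasing does not quite work. The family $\{\Rs(t_f,q)\}$ is \emph{not} directed under inclusion: your own concatenation identity gives $\Rs(q_1\ast q_2)=\Phi_{q_2}\Rs(q_1)+\Rs(q_2)$, which contains $\Rs(q_2)$ but only the flowed copy $\Phi_{q_2}\Rs(q_1)$, so there is no common upper bound for $\Rs(q_1)$ and $\Rs(q_2)$. Nor does stabilization of the cycling chain $\Rs(q^{(k)})$ by itself explain why its limit absorbs every other $\Rs(q)$. The clean fix uses dimension rather than inclusion: choose $q^\star$ with $\dim\Rs(q^\star)$ maximal. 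From $\Rs(q^\star\ast q^\star)\supseteq\Rs(q^\star)$ and maximality you get equality, and since $\Rs(q^\star\ast q^\star)=\Phi_{q^\star}\Rs(q^\star)+\Rs(q^\star)$ this forces $\Phi_{q^\star}\Rs(q^\star)\subseteq\Rs(q^\star)$, hence $\Phi_{q^\star}^{-1}\Rs(q^\star)=\Rs(q^\star)$. Now for arbitrary $q$, the same reasoning on $q\ast q^\star$ gives $\Rs(q\ast q^\star)=\Rs(q^\star)$, and expanding $\Rs(q\ast q^\star)=\Phi_{q^\star}\Rs(q)+\Rs(q^\star)$ yields $\Rs(q)\subseteq\Phi_{q^\star}^{-1}\Rs(q^\star)=\Rs(q^\star)$. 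Thus $\Rs=\Rs(q^\star)$ is a subspace, and your span/invariant-hull argument for $V\subseteq\Rs$ goes through.
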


Theorem \ref{theo:AlgCond} generalizes the well-known reachability and observability criteria for LTI systems. In the context of LTI systems, the reachable set is a linear subspace of $\R^n$ given by $ \Rs = \sum_{k=0}^{\infty} A^k\range{B}$, \emph{i.e.}, all possible combinations of one variable polynomials in $A$ multiplied by $B$. In the context of LSS,  the reachable set is also a linear subspace of $\R^n$ given by all possible combination of $M$-variate polynomials  in $A_1, \dots A_M$ multiplied by $B_k$. Moreover, this subspace can be seen as the smallest subspace of $\R^n$ that contains each $\range{B_i}$ and is invariant under each $A_i$, for $i \in \Omega$. 

In what follows, we state the main result of this paper.

\begin{theo}[Gramian conditions]\label{theo:GramCond} Let $\Pg, \Qg$ be the solutions of the generalized Lyapunov equations \eqref{eq:GenLyapLSSreach} and \eqref{eq:GenLyapLSSobsev}, respectively.  Then, the reachable and observable spaces $\Rs$, $\Os$ are given by 
	\begin{equation*}
			\Rs= \range \Pg ~\,~\textnormal{and}~\,~\Os=\range \Qg.
	\end{equation*}
\end{theo}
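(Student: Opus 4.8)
The plan is to prove the reachability identity $\Rs = \range{\Pg}$; the observability identity $\Os = \range{\Qg}$ then follows by a verbatim dual argument, replacing $(A_1, D_j, B_j)$ with $(A_1^T, D_j^T, C_j^T)$ and $\Rs$ with $\Os$. The starting point is the decomposition $\Pg = \sum_{k=1}^{\infty}\Pg_k$ recalled above, in which every $\Pg_k \succeq 0$ solves $A\Pg_1 + \Pg_1 A^T + \sum_j B_j B_j^T = 0$ and $A\Pg_k + \Pg_k A^T + \sum_j D_j \Pg_{k-1} D_j^T = 0$. The elementary fact I would use is that a vanishing quadratic form on a convergent sum of positive semidefinite matrices forces every summand to vanish: $v^T \Pg v = \sum_k v^T \Pg_k v = 0$ implies $\Pg_k v = 0$ for all $k$, so $\ker \Pg = \bigcap_k \ker \Pg_k$. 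Since all matrices are symmetric and we work in $\R^n$, taking orthogonal complements yields $\range{\Pg} = \sum_{k=1}^{\infty}\range{\Pg_k}$, an ascending sum of subspaces that stabilizes at dimension at most $n$. This reduces the theorem to identifying each $\range{\Pg_k}$.

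Next I would compute $\range{\Pg_k}$ from the closed form $\Pg_k = \int_0^\infty e^{At} S_k e^{A^T t}\,dt$ with $S_1 = \sum_j B_j B_j^T$ and $S_k = \sum_j D_j \Pg_{k-1} D_j^T$. The workhorse is the standard Gramian identity $\range{\int_0^\infty e^{At} S e^{A^T t}\,dt} = \langle A \mid \range{S}\rangle$, the smallest $A$-invariant subspace containing $\range{S}$: one checks $x\in\ker$ iff $S^{1/2} e^{A^T t} x \equiv 0$, then differentiates at $t=0$ and invokes Cayley--Hamilton. Combined with $\range{D_j \Pg_{k-1} D_j^T} = D_j \range{\Pg_{k-1}}$ (factoring $\Pg_{k-1} = LL^T$) and $\range{\sum_j B_j B_j^T} = \sum_j \range{B_j}$, and recalling $A = A_1$, this gives the recursion $\range{\Pg_1} = \langle A_1 \mid \sum_j \range{B_j}\rangle$ and $\range{\Pg_k} = \langle A_1 \mid \sum_j D_j \range{\Pg_{k-1}}\rangle$.

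Finally I would set $\mathcal{V} := \sum_k \range{\Pg_k}$ and prove $\mathcal{V} = \Rs$ by double inclusion, using that, by Theorem~\ref{theo:AlgCond}, $\Rs$ is the smallest subspace containing every $\range{B_i}$ and invariant under every $A_i$. For $\mathcal{V}\subseteq\Rs$, an induction on $k$ shows $\range{\Pg_k}\subseteq\Rs$: the base case is $A_1$-invariance of $\Rs$, and the step uses $D_j = A_j - A_1$ so that $D_j \Rs \subseteq \Rs$, followed again by $A_1$-invariance. For $\Rs\subseteq\mathcal{V}$, I would verify that $\mathcal{V}$ contains each $\range{B_i}$ (already inside $\range{\Pg_1}$) and is invariant under each $A_i = A_1 + D_i$: every $\range{\Pg_k}$ is $A_1$-invariant by its very form $\langle A_1 \mid \cdot\rangle$, while $D_i \range{\Pg_k} \subseteq \sum_j D_j \range{\Pg_k} \subseteq \range{\Pg_{k+1}} \subseteq \mathcal{V}$ (taking the $A_1^0$ term); minimality of $\Rs$ then forces $\Rs \subseteq \mathcal{V}$.

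The step I expect to be most delicate is the rigorous handling of the two ``range of a sum equals sum of ranges'' reductions for positive semidefinite matrices, in particular the passage through the infinite sum $\Pg = \sum_k \Pg_k$ (which is clean only because everything collapses in finite dimension), together with the bookkeeping that translates the $\langle A_1 \mid \cdot\rangle$ recursion phrased in $(A_1, D_j)$ into the multi-word invariant-subspace description of $\Rs$ in the original matrices $A_i$ via $D_j = A_j - A_1$.
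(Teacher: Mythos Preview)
Your proposal is correct and follows essentially the same approach as the paper: decompose $\Pg = \sum_k \Pg_k$, use that the range of a sum of positive semidefinite matrices is the sum of the ranges, identify each $\range{\Pg_k}$ via the standard Lyapunov/Gramian range identity (your $\langle A_1 \mid \cdot\rangle$ is exactly the paper's $\sum_l A^l\range{\cdot}$), and then match the resulting $(A_1,D_j)$-description to the algebraic characterization of $\Rs$ from Theorem~\ref{theo:AlgCond}. The only cosmetic difference is the final translation step: the paper unrolls the recursion into explicit multi-words $D_{i_k}^{j_k}\cdots D_{i_1}^{j_1}\range{B_{i_0}}$ (with $A=D_{M+1}$) and argues that $\mathrm{span}\{A_1,\dots,A_M\}=\mathrm{span}\{D_1,\dots,D_M,A\}$ makes this coincide with Theorem~\ref{theo:AlgCond}, whereas you bypass the explicit words and instead verify directly that $\mathcal{V}=\sum_k\range{\Pg_k}$ satisfies the minimality characterization of $\Rs$ as the smallest $A_i$-invariant subspace containing all $\range{B_i}$---your route is a bit cleaner, but the underlying mechanism (the $D_j=A_j-A_1$ swap) is identical.
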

\begin{proof}
	The proof of this theorem shows that the range of $\Pg$ and $\Qg$ are given by the algebraic condition of Theorem~\ref{theo:AlgCond}. The complete proof is detailed in Appendix A.
\end{proof}

Theorem \ref{theo:GramCond} states a Gramian-based characterization of the reachable and observable sets. Moreover, the following reachability and observability criteria are corollaries of this result. 

\begin{cor}[Reachability and observability criteria]\label{cor:ReachObservCrit}
	 Given $\Sigma$, an LSS,  and suppose that $\Pg, \Qg $ are the unique solutions of the generalized Lyapunov equations \eqref{eq:GenLyapLSSreach} and \eqref{eq:GenLyapLSSobsev}. Then, 
	\begin{enumerate}
			\item $\Sigma$ is completely reachable if and only if \[\range \Pg = \R^n.\]
			\item $\Sigma$ is completely observable if and only if \[\range \Qg = \R^n.\]
	\end{enumerate}
\end{cor}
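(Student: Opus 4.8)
The plan is to derive both statements as immediate corollaries of Theorem~\ref{theo:GramCond}, which identifies the reachable and observable subspaces with the ranges of $\Pg$ and $\Qg$. The argument amounts to unwinding the definitions of complete reachability and complete observability, so no new analytic work is needed beyond invoking the preceding theorem.

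First I would recall that, by definition, an LSS $\Sigma$ is \emph{completely reachable} precisely when every state $x \in \R^n$ is reachable, that is, when $\Rs = \R^n$. Substituting the identity $\Rs = \range \Pg$ supplied by Theorem~\ref{theo:GramCond} then yields at once that $\Sigma$ is completely reachable if and only if $\range \Pg = \R^n$, which is the first claim. Since $\Pg$ is assumed to be the unique solution of \eqref{eq:GenLyapLSSreach}, this equivalence is well defined.

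For the second claim I would proceed analogously, taking some care with the definition. An LSS is \emph{completely observable} exactly when its unobservable set $\mathcal{U}\mathcal{O}$ is trivial, i.e. $\mathcal{U}\mathcal{O} = \{0\}$. Because the observable set is defined by $\Os = (\mathcal{U}\mathcal{O})^\perp$ and $(\{0\})^\perp = \R^n$, the condition $\mathcal{U}\mathcal{O} = \{0\}$ is equivalent to $\Os = \R^n$. Invoking the identity $\Os = \range \Qg$ from Theorem~\ref{theo:GramCond} then shows that $\Sigma$ is completely observable if and only if $\range \Qg = \R^n$.

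The only point requiring attention, and it is a minor one, is the passage between the unobservable set being trivial and the observable set filling the whole state space; this is immediate from the orthogonal-complement definition of $\Os$. Apart from this bookkeeping, the corollary is simply the specialization of Theorem~\ref{theo:GramCond} to the case where the relevant subspace equals $\R^n$, so I anticipate no genuine obstacle in carrying it out.
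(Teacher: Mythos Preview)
Your proposal is correct and follows essentially the same approach as the paper: both derive the corollary directly from Theorem~\ref{theo:GramCond} by noting that complete reachability (respectively, observability) means $\Rs = \R^n$ (respectively, $\Os = \R^n$), and then substituting the Gramian characterizations. Your additional remark unpacking the observability definition via $\mathcal{U}\mathcal{O} = \{0\} \Leftrightarrow \Os = \R^n$ is a welcome clarification that the paper leaves implicit.
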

\begin{proof}
The LSS is completely reachable (respectively, observable) if and only if $\Rs = \R^n$ (respectively, $\Os = \R^n$). Then the result is a straightforward application of Theorem \ref{theo:GramCond}.  
\end{proof}
Corollary \ref{cor:ReachObservCrit} provides simple criteria for determine if a given LSS is completely reachable and observable. This result is equivalent to verifying if the algebraic conditions given in Theorem \ref{theo:AlgCond} generate the entire space. However, to the best of the authors' knowledge, they have not been presented in this Gramian-based form. %In what follows, we present the proof of Theorem \ref{theo:GramCond}.

To sum up, the Gramians $\Pg$ and $\Qg$ proposed in Definition~\ref{def:Ggram} encode the reachable and observable spaces of a given LSS (as stated in Theorem \ref{theo:GramCond}). As a consequence, Corollary \ref{cor:ReachObservCrit} provides a simple way to verify if a given LSS is completely reachable and observable. In the next section, we present  the procedure for model order reduction by balanced truncation using these Gramians.

\section{Model reduction for linear switched systems }
In this section, we state the balancing procedure for model reduction of LSS and we state some sufficient conditions under which this procedure preserves stability, and provide an approximation error bound.

\subsection{Balanced truncation for LSS}

As mentioned before, the Gramians $\Pg$ and $\Qg$ encode the reachable and observable spaces. This can be rewritten as follows :
\begin{enumerate}
	\item If a state $x$ lies in $\ker(\Pg)$, then it is unreachable.
	\item If a state $x$ lies in $\ker(\Qg) $, then it is unobservable.
\end{enumerate}
Hence, the subspace $\ker (\Pg\Qg)$ is not important for the transfer between input and output and might be truncated. This motivates us to use the proposed Gramians to determine the reduced-order models. To guarantee that states which are hard to control and hard to observe  will be truncated simultaneously, we need to find a transformation $T$, leading to a transformed switched system, whose controllability and observability Gramians are equal and diagonal, \emph{i.e.},
\[ T^{-1}\Pg T^{-T} = T^T\Qg T  = \Sigma = \diag{\sigma_1, \dots, \sigma_n}, \] 
with $\sigma_i \leq \sigma_{i+1}$. This balancing transformation exists if and only if $\Pg$ and $\Qg$ are full rank matrices (see Chapter 7 of \cite{morAnt05}). Next, we assume that the matrices of the balanced system are partitioned as
\[A_{j,\mathcal{B}} = \begin{bmatrix}
A_{j}^{11} & A_{j}^{12}  \\ A_{j}^{21} & A_{j}^{22}  
\end{bmatrix},~ B_{j,\mathcal{B}} = \begin{bmatrix}
B_{j}^{1}  \\  B_{j}^{2}  
\end{bmatrix},~C_{j,\mathcal{B}} = \begin{bmatrix}
C_{j}^{1}  &  C_{j}^{2}  
\end{bmatrix}~\textnormal{and}~ \Sigma = \begin{bmatrix}
\Sigma_1 & 0  \\ 0 & \Sigma_2 
\end{bmatrix},  \]
where $\Sigma_1 = \diag{\sigma_1, \dots, \sigma_r}$ and $\Sigma_2 = \diag{\sigma_{r+1}, \dots, \sigma_n}$. In the balancing basis, the truncation step is simply obtained by setting the ROM  to be given by the matrices $\hat{A}_j = A_{j}^{11}, \hat{B}_j = B_{j}^{1} ,~\hat{C}_j = C_{j}^{1}$. Analogous to the linear case, we do not need to compute the balanced transformation explicitly. Instead, one can construct two projection matrices $V$ and $W$ using the Cholesky factors of $\Pg$ and $\Qg$, and the SVD of their product. This procedure is known as square-root balanced truncation, and its version for LSS is presented in Algorithm~1.

%In order to guarantee that hard to control and hard to observe states are truncated simultaneously, we need to find a transformation $x\rightarrow T^{-1}x$, leading to a transformed LSS, whose controllability and observability Gramians are equal and diagonal, i.e.,

%\[ T^{-1}\Pg T^{-T} = T^TQT = diag(\sigma_1,\dots, \sigma_n) \]

%Analogous to the linear case, let factorize $P = L^T$ and $L^TQL = U\Sigma^2 U^T$ and compute the transformation \[T = \Sigma^{-1/2}U^TL^{-1}~\,~\textnormal{and}~\,~ T^{-1} = LU\Sigma^{1/2}. \]
%Then we obtain our transformed LSS by
%\[ \tilde{A}_k = TA_kT^{-1},~\tilde{B}_k = TB_k,~\tilde{C}_k = C_kT^{-1} \] % , one finds the corresponding transformation matrix in T = LUΣ−1/2. Now, w.l.o.g, we consider the following bilinear system being a balanced bilinear system:
%\[ \Pg\Qg  \rightarrow V, W \]

\begin{algorithm}\label{algo:BT}
	\caption{Balanced truncation for LSS}
	\begin{algorithmic}[1]
		\renewcommand{\algorithmicrequire}{\textbf{Input:}}
		\renewcommand{\algorithmicensure}{\textbf{Output:}}
		\REQUIRE Matrices $(A_j,B_j,C_j)$ for $j=1,\dots M$ and reduced order $r$. 
		\ENSURE  Reduced order matrices $\Sigma(\hat A_j,\hat B_j,\hat C_j)$ for $j=1,\dots M$.
		\STATE Let $A = A_1$, $D_k = A_k-A_1$.
		\STATE Compute $\Pg$ and $\Qg$ by solving the generalized Lyapunov equations \eqref{eq:GenLyapLSSreach} and \eqref{eq:GenLyapLSSobsev}.
%		\STATE Compute \[A\Pg + \Pg A^T + \ds\sum_{k=1}^{m} \bigg(D_k\Pg D_k^T\bigg) + \ds\sum_{k=1}^{m} \bigg( B_kB_k^T\bigg) = 0 \]
%	 \[A\Pg + \Pg A^T + \ds\sum_{k=1}^{m} \bigg(D_k\Pg D_k^T\bigg) + \ds\sum_{k=1}^{m} \bigg( B_kB_k^T\bigg) = 0 \]
	 
	 \STATE Compute the Cholesky decomposition $\Pg = SS^T$ and $\Qg = RR^T$.
	 \STATE Compute SVD of $S^TR$ and write as \[ S^TR =U\Sigma V^T = \begin{bmatrix}
	 	U_1 & U_2
	 	\end{bmatrix}\diag{\Sigma_1, \Sigma_2}\begin{bmatrix}
	 	V_1 & V_2
	 	\end{bmatrix}^T \]
	 	\STATE Construct the projection matrices $ V = SU_1\Sigma_1^{-\frac{1}{2}}$ and $W = RV_1\Sigma_1^{\frac{1}{2}} $  
	 	\STATE Construct $\hat{A}_j = W^TA_jV$, $\hat{B}_j = W^TB_j$ and $\hat{C}_j = C_jV$ for $j=1, \dots, M$.
		\RETURN $\hat{A}_j, \hat{B}_j$ and $\hat{C}_j$ .
	\end{algorithmic} 
\end{algorithm}

One should notice that, if matrix $A$ is Hurwitz, the proposed procedure provides a matrix $\hat{A} = W^TAV$ which is also Hurwitz. This is a consequence of Theorem 2.3 from \cite{benner2016positive}. In a large-scale setting, a solution of the generalized Lyapunov equation is computed directly in the factorized form, \emph{i.e.},  one searches for the solution $S$ as a low-rank factor such that $\Pg \approx SS^T$  (see \cite{benner2013low},  \cite{shank2016efficient} and \cite{jarlebring2017krylov}). In this context, one can avoid constructing the full solutions $\Pg, \Qg$, which is very costly with respect to memory consumption and computational resources.

In the next subsection, under some assumptions, we show  some properties of the reduced order models obtained by Algorithm 1.     % \red{Moreover if D terms are small w enough . }

%\red{In the following, we state some sufficient conditions such that  a and error bounds for }

\subsection{Quadratic stability preservation and error bounds}

We briefly review  the definition of quadratic stability for LSS.

\begin{mydef}[Quadratic stability \cite{petreczky2013balanced}, Lemma 1] An LSS as in \eqref{eq:LSS} is said to be quadratically stable if there exists a positive definite matrix $P>0$ such that
	\[ A_j^TP+PA_j <0,~\textnormal{for all $j\in \Omega$}. \]
\end{mydef}
Quadratic stability is a sufficient condition for exponential stability for all switching signals (see \cite{liberzon2012switching}). In what follows in this section, we employ the following assumption.

\begin{assump}\label{assump:LMI} Let $\Pg$ and $\Qg$ be symmetric positive definite  solutions of \eqref{eq:GenLyapLSSreach} and \eqref{eq:GenLyapLSSobsev}.  Let us assume that
\begin{subequations}
\begin{align}
D_k \Pg + \Pg D_k^T &\leq \sum_{j=1}^M D_j \Pg D_j^T +  \sum_{j=1, j\neq k}^M B_jB_j^T, ~\,~ and \\
D_k^T \Qg + \Qg D_k &\leq \sum_{j=1}^M D_j^T \Qg D_j +  \sum_{j=1, j\neq k}^M C_j^TC_j,
\end{align}
\end{subequations}
for every $k=2,\dots, M$.
\end{assump}
Reader should notice that Assumption~\ref{assump:LMI} implies that
\begin{subequations}\label{eq:LMIGram}
\begin{align}
A_k\Pg +\Pg A_k^T +B_kB^T_k &\leq 0, \\ 
A_k^T\Qg +\Qg A_k +C_k^TC_k &\leq 0,
\end{align}
\end{subequations}
for every $k\in \Omega$. Hence, under Assumption 1, the Gramians proposed in this work are also Gramians in the sense of  \cite{petreczky2013balanced} (Definition 10), \emph{i.e.}, symmetric positive definite matrices which satisfy the set of LMIs \eqref{eq:LMIGram}. As a consequence, under the Assumption \ref{assump:LMI}, all of the results developed in \cite{petreczky2013balanced} are also valid for the Gramians proposed in \eqref{eq:LSGram}. Two are particularly important for model reduction, and we recall them in what follows. 

\begin{prop}[Quadratically stability preservation\cite{petreczky2013balanced}, Lemma 12]\label{prop:QuadStabPreserv}  Under Assumption \ref{assump:LMI}, if at least one of the inequalities \eqref{eq:LMIGram} is strict, \emph{i.e.}, 
	 \[A_k\Pg +\Pg A_k^T +B_kB^T_k < 0, \forall k\in \Omega, ~\,or~\, A_k^T\Qg +\Qg A_k +C_k^TC_k < 0, \forall k\in \Omega, \]
then the reduced order model constructed by Algorithm 1 is also quadratically stable. 
\end{prop}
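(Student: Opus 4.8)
The plan is to exhibit an explicit quadratic Lyapunov certificate $\hat{P}>0$ for the reduced matrices $\hat{A}_k$, built directly from the truncated Gramian. The starting point is the observation already recorded after Assumption~\ref{assump:LMI}: under that assumption $\Pg$ and $\Qg$ satisfy the Lyapunov inequalities \eqref{eq:LMIGram} for every $k\in\Omega$, so I have a family of negative semidefinite matrices to work with, and if one of the two families in the hypothesis is strict, the corresponding matrices are negative definite. The structural fact I would exploit is that balanced truncation extracts a leading principal block, together with the elementary fact that a principal submatrix of a negative (semi)definite matrix is again negative (semi)definite.

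First I would pass to balancing coordinates. Since $\Pg,\Qg$ are positive definite (hence full rank), the balancing transformation $T$ with $T^{-1}\Pg T^{-T}=T^T\Qg T=\Sigma=\diag{\sigma_1,\dots,\sigma_n}$ exists. Applying the congruence $X\mapsto T^{-1}XT^{-T}$ to the first inequality in \eqref{eq:LMIGram} and $X\mapsto T^T X T$ to the second turns them into
\begin{subequations}
\begin{align}
A_{k,\mathcal{B}}\Sigma + \Sigma A_{k,\mathcal{B}}^T + B_{k,\mathcal{B}}B_{k,\mathcal{B}}^T &\leq 0, \\
A_{k,\mathcal{B}}^T\Sigma + \Sigma A_{k,\mathcal{B}} + C_{k,\mathcal{B}}^TC_{k,\mathcal{B}} &\leq 0,
\end{align}
\end{subequations}
for every $k\in\Omega$, with the same strictness as in the hypothesis, since congruence by an invertible matrix preserves definiteness. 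Reading off the leading $r\times r$ principal block and using the partition of the balanced matrices introduced before Algorithm~1 gives
\[
A_k^{11}\Sigma_1 + \Sigma_1 (A_k^{11})^T + B_k^1 (B_k^1)^T \leq 0 \quad\text{and}\quad (A_k^{11})^T\Sigma_1 + \Sigma_1 A_k^{11} + (C_k^1)^T C_k^1 \leq 0,
\]
because a principal submatrix of a negative semidefinite matrix is negative semidefinite, and of a negative definite matrix is negative definite.

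Next I would convert these reduced inequalities into a quadratic stability certificate for $\hat{A}_k=A_k^{11}$. Dropping the positive semidefinite terms $B_k^1(B_k^1)^T$ and $(C_k^1)^T C_k^1$ and conjugating the first inequality by $\Sigma_1^{-1}$ yields $\hat{A}_k^T\Sigma_1^{-1}+\Sigma_1^{-1}\hat{A}_k\leq 0$, while the second reads directly $\hat{A}_k^T\Sigma_1+\Sigma_1\hat{A}_k\leq 0$. Hence $\hat{P}=\Sigma_1^{-1}$ and $\hat{P}=\Sigma_1$ are candidate certificates arising from the reachability and observability inequalities respectively, both positive definite. If the reachability family in \eqref{eq:LMIGram} is strict for all $k$, the principal-submatrix step makes the first reduced inequality strict, so $\Sigma_1^{-1}$ is a strict certificate, i.e.\ $\hat{A}_k^T\Sigma_1^{-1}+\Sigma_1^{-1}\hat{A}_k<0$ for all $k$; symmetrically, if the observability family is strict, $\Sigma_1$ works. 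Either way the reduced LSS is quadratically stable.

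The step I expect to require the most care is the link between the explicit balancing used above and the projection form actually produced by Algorithm~1: I must check that $\hat{A}_k=W^TA_kV$, $\hat{B}_k=W^TB_k$, $\hat{C}_k=C_kV$ coincide with the leading blocks $A_k^{11},B_k^1,C_k^1$ of the balanced realization. This is the standard equivalence between square-root balanced truncation and explicit balancing-and-truncation (Chapter~7 of \cite{morAnt05}): from $S^TR=U\Sigma V^T$ one identifies $V$ with the leading $r$ columns of a balancing transformation $T$ and $W^T$ with its leading $r$ rows, whence $W^TA_kV=(T^{-1}A_kT)_{11}=A_k^{11}$, and similarly for $B$ and $C$; the relation $W^TV=I_r$ keeps the reduction a genuine projection. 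This bookkeeping is routine but is the only non-elementary ingredient. As a shortcut, once \eqref{eq:LMIGram} is in hand the statement is exactly \cite{petreczky2013balanced}, Lemma~12, applied to the present Gramians, so the argument can equally be closed by invoking that reference directly.
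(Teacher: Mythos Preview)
Your proposal is correct. The paper does not actually give a proof of this proposition: it simply observes that under Assumption~\ref{assump:LMI} the Gramians $\Pg,\Qg$ satisfy the LMIs~\eqref{eq:LMIGram}, which makes them Gramians in the sense of \cite{petreczky2013balanced}, and then invokes Lemma~12 of that reference directly---exactly the shortcut you note in your final sentence. Your detailed argument (pass to balanced coordinates, take principal blocks, exhibit $\Sigma_1$ or $\Sigma_1^{-1}$ as the Lyapunov certificate) is essentially the content one would find behind that citation, so you have supplied more than the paper itself does.
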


Proposition \ref{prop:QuadStabPreserv} states a sufficient condition to preserve quadratic stability by model reduction using Algorithm 1.  The following result provides an error bound between the origial and the reduced order model.

\begin{theo}[Error bound\cite{petreczky2013balanced}, Theorem 6]\label{theo:ErrorBound} Under Assumption \ref{assump:LMI}, the output error between the original model and the reduced order model \eqref{eq:LSSred}, obtained by Algorithm 1, is bounded by
	\begin{equation}\label{eq:ErrorBound}
	\|y -\hat{y}\|_{L_2} \leq 2 \left(\sum_{k=r+1}^n \sigma_k\right) \|u\|_{L_2} 
	\end{equation}
for every switching signal $q(t)$, where $\sigma_k$ are the neglected singular values. 
\end{theo}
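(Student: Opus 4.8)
The plan is to derive this error bound by exploiting the connection, established in the text immediately above, between the proposed generalized Gramians and the LMI-based Gramians of \cite{petreczky2013balanced}. The key observation is that Theorem~\ref{theo:ErrorBound} is stated as a direct citation: \emph{under Assumption~\ref{assump:LMI}}, the matrices $\Pg$ and $\Qg$ satisfy the LMIs \eqref{eq:LMIGram}, hence they qualify as Gramians in the sense of \cite[Definition~10]{petreczky2013balanced}. Since Algorithm~1 performs exactly the square-root balanced truncation that \cite[Theorem~6]{petreczky2013balanced} analyzes, the result should follow by verifying that the hypotheses of that cited theorem are met and then invoking it. So the first step is to confirm that the balancing transformation produced in steps 3--5 of Algorithm~1 coincides with the balancing framework of \cite{petreczky2013balanced}, so that the singular values $\sigma_k$ appearing in \eqref{eq:ErrorBound} are indeed the Hankel-type singular values from the SVD of $S^T R$.

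The substantive part of the argument, were one to reprove the bound rather than cite it, would proceed in the standard balanced-truncation style adapted to the switched/bilinear setting. First I would set up the error system, whose state is the stacked $(x, \hat x)$ and whose output is $y - \hat y$, and realize it as an LSS (or bilinear system) with matrices built from the original and reduced modes. Next I would construct a candidate Lyapunov/storage function for this error system using the neglected block $\Sigma_2 = \diag{\sigma_{r+1}, \dots, \sigma_n}$ of the balanced Gramians; in the balanced coordinates the truncated dynamics associated with the small singular values carry little energy, which is the mechanism that produces the factor $\sum_{k=r+1}^n \sigma_k$. The LMIs \eqref{eq:LMIGram}, guaranteed by Assumption~\ref{assump:LMI}, are precisely what makes this storage function dissipative for every individual mode $k \in \Omega$, and therefore for every switching signal $q(t)$ — this is the point where the switched nature is handled uniformly. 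Summing the dissipation inequality along any trajectory and integrating in $L_2$ would then yield \eqref{eq:ErrorBound}, with the factor of $2$ arising from the usual one-step-at-a-time truncation telescoping argument over the neglected singular values.

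The main obstacle I anticipate is not any single computation but rather the verification that the balanced truncation of an LSS behaves like that of a genuine bilinear or LTI system, even though, as the authors stress in Section~2, an LSS is \emph{not} equivalent to a bilinear realization because $B_k \neq B_j$ and $C_k \neq C_j$ in general. Concretely, the delicate point is that the switching indicators $q_j(t)$ are treated as inputs in \eqref{eq:LSSbilinear2}, yet the output map also depends on them; one must check that the dissipativity argument still closes when the same $q_j(t)$ multiplies both the dynamics and the read-out, so that the storage-function inequality holds mode-by-mode. This is exactly what Assumption~\ref{assump:LMI} is engineered to ensure, via its consequence \eqref{eq:LMIGram}, so the cleanest route is to reduce everything to checking \eqref{eq:LMIGram} and then to invoke \cite[Theorem~6]{petreczky2013balanced} rather than to reconstruct the dissipation estimate from scratch. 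Since the cited theorem already establishes the bound under hypothesis \eqref{eq:LMIGram}, and the text has just shown Assumption~\ref{assump:LMI} implies \eqref{eq:LMIGram}, the proof reduces to this implication together with the identification of the balancing data.
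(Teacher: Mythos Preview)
Your proposal is correct and matches the paper's approach exactly: the paper provides no independent proof of this theorem but simply cites \cite[Theorem~6]{petreczky2013balanced}, relying on the observation (stated just before the theorem) that Assumption~\ref{assump:LMI} implies the LMIs \eqref{eq:LMIGram}, so that $\Pg,\Qg$ are Gramians in the sense of \cite[Definition~10]{petreczky2013balanced} and the cited result applies directly to the balanced truncation of Algorithm~1. Your additional sketch of the dissipativity/storage-function argument is a reasonable outline of how the cited theorem itself is proved, but goes beyond what the present paper does.
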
 

Proposition \ref{prop:QuadStabPreserv} and Theorem \ref{theo:ErrorBound} provide some important properties of model reduction by balanced truncation using the Gramians from Definition~\ref{def:Ggram}. However, they are only proved here to be valid when Assumption 1 holds. Since this assumption involves LMIs, they are hard to be checked in the large-scale setting. We left as an open problem  whether weaker assumptions exist such that similar results are also valid.

In the next section, we apply the results derived in this paper in some numerical examples. 
\section{Numerical examples}
This section is dedicated to the application of results proposed in Sections III and  IV, namely the Gramian-based characterization (Theorem \ref{theo:GramCond}) of the reachable and observable spaces and the balanced truncation procedure  (Algorithm 1).  The results will be compared with the balancing method proposed \cite{monshizadeh2012simultaneous}. There, it has been shown that, if certain restrictive conditions are satisfied, a simultaneous balanced transformation can be constructed. When those conditions are not satisfied, the authors propose  to use, instead, the so-called  reachability and observability average Gramians given by
\[ \Pg_{avg} = \sum_{k=1}^M \Pg_k~\,~\textnormal{and}~\,~   \Qg_{avg} = \sum_{k=1}^M \Qg_k, \]
which satisfy
\[ \begin{array}{rcl}
 A_k\Pg_k + \Pg_kA_k^T +B_kB_k^T &=& 0, \\ A_k^T\Qg_k + \Qg_kA_k +C_k^TC_k &=& 0.
\end{array}\]
 
 In what follows, we illustrate the Gramian-based characterization of the reachbility set using Theorem \ref{theo:GramCond}.

\subsection{Example 1: Reachability set of LSS}
 
Let us consider  an 2-modes LSS $\Sigma$ given by
\[A_1 = -I_{8}, A_2 = A_1+D, \] 
where $D\in \R^{8\times 8}$ satisfies $D_{21} = D_{32} = D_{43} = 1$ and $D_{jk} =0$ elsewhere. In addition,
% \[ D = \begin{bmatrix}
%1 & 0 \\ 
%0 & 0 
%\end{bmatrix} \otimes \begin{bmatrix}
% 0 & 0 &0 & 0\\ 1 & 0 & 0 & 0 \\ 
% 0 & 1 & 0 & 0 \\ 0 & 0 & 1 & 0
%\end{bmatrix}, \] 
%and
 $B_1^T = \begin{bmatrix}
1 & 0 & \dots &  0
\end{bmatrix} $, $B_2^T = \begin{bmatrix}
0 & \dots & 0 & 1
\end{bmatrix}$. 
Then, the reachability Gramian $\Pg$ given by equation \eqref{eq:GenLyapLSSreach} is
\[\Pg = \diag{\frac{1}{2},\frac{1}{4} , \frac{1}{8}, \frac{1}{16}, 0, 0, 0, \frac{1}{2}} \]
and the average reachability Gramian (proposed in \cite{monshizadeh2012simultaneous}) is 
\[\Pg_{avg} = \diag{\frac{1}{2}, 0, 0, 0, 0, 0, 0, \frac{1}{2}}. \]
As a consequence, since $\range{\Pg} \neq 8$, Corollary \ref{cor:ReachObservCrit} tells us that $\Sigma$ is not completely reachable. In addition, according to Theorem \ref{theo:GramCond},  the reachable space of $\Sigma$ is given by
\[ \Rs = \range{\Pg} = \sspan{e_1, e_2, e_3, e_4, e_8} \]
Notice that the average Gramian $\Pg_{avg}$ does not encode the reachability space. More generally, one can show that 
\[ \range{\Pg_{avg}} \subset \range{\Pg}. \]  

In what follows, we use the Gramians to construct reduced order models via Algorithm~1.
\subsection{Example 2: Model reduction by balancing}

For the next experiment, let us consider a 2-modes LSS of order 1000, whose matrices are given by
\[ A_1 =\ds \begin{bmatrix}
-2 & 1 & &  \\
0.1 & -2 & 1 &   \\
& \ddots &\ddots &\ddots  \\
& &   0.1  & -2 \\
\end{bmatrix},~A_2 = \begin{bmatrix}
-2 & 0.5 & &  \\
1 & -2 & 0.5 &   \\
& \ddots &\ddots &\ddots  \\
& &   0.1  & -2 \\
\end{bmatrix},  \]
$B_1^T =\begin{bmatrix}
1 & 0 & \dots & 0
\end{bmatrix},~B_2^T =\begin{bmatrix}
0 & \dots & 0 & 1
\end{bmatrix},  $
$C_1 =\begin{bmatrix}
0 & 1 & 0 & \dots & 0
\end{bmatrix}$ and $C_2 =\begin{bmatrix}
0 & \dots & 0 & 1 & 0
\end{bmatrix}.$

Using $A_1 = A$ and $D = A_2-A_1$, we compute the generalized Gramians satisfying
\[ \begin{array}{rcl}
A\Pg +\Pg A^T + D\Pg D^T + B_1B_1^T + B_2B_2^T   &=& 0, \\
A^T\Qg +\Qg A + D^T\Qg D + C_1^TC_1 + C_2^TC_2  &=& 0,
\end{array} \] 
and the averaged Gramians $\Pg_{avg} = \frac{1}{2}(\Pg_1 +\Pg_2)$ and $\Qg_{avg} = \frac{1}{2}(\Qg_1 +\Qg_2)$. 
The Hankel singular values are represented in Figure 1. %given by $eig(\Pg\Qg)$ 
%\[ \red{\textnormal{\emph{Plot last Hsvd of for $\Pg$ and $\Pg_{avg}$}}}\]

 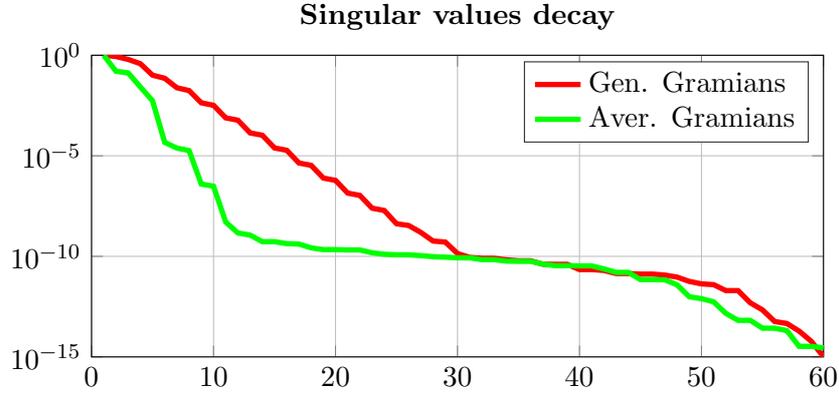
\begin{figure}[H]\label{fig:HankelSVD}
	\newlength\wex
	\newlength\hex
	\setlength{\wex}{0.8\textwidth}
	\setlength{\hex}{4cm}
	\begin{center}
		% This file was created by matlab2tikz.
%
%The latest updates can be retrieved from
%  http://www.mathworks.com/matlabcentral/fileexchange/22022-matlab2tikz-matlab2tikz
%where you can also make suggestions and rate matlab2tikz.
%
\begin{tikzpicture}

\begin{axis}[%
width=0.951\wex,
height=\hex,
at={(0\wex,0\hex)},
scale only axis,
xmin=0,
xmax=60,
ymode=log,
ymin=1e-15,
ymax=1,
yminorticks=true,
axis background/.style={fill=white},
title style={font=\bfseries},
title={Singular values decay},
xmajorgrids,
ymajorgrids,
yminorgrids,
legend style={legend cell align=left, align=left, draw=white!15!black}
]
\addplot [color=red, line width=2.0pt]
  table[row sep=crcr]{%
1	1\\
2	0.870386929663933\\
3	0.620967985779413\\
4	0.381046130354309\\
5	0.102300017797286\\
6	0.0715355224743715\\
7	0.0241976181204495\\
8	0.0175167576076616\\
9	0.00429826258815177\\
10	0.00326057980159347\\
11	0.000770888690286747\\
12	0.000590388607772512\\
13	0.000137587304586122\\
14	0.000105582403512743\\
15	2.45593954027926e-05\\
16	1.88535896104863e-05\\
17	4.38356922058738e-06\\
18	3.36539025303425e-06\\
19	7.82407962333454e-07\\
20	6.00692106139554e-07\\
21	1.39615632658389e-07\\
22	1.07217027393209e-07\\
23	2.48407362889515e-08\\
24	1.91363333285628e-08\\
25	4.18925790323968e-09\\
26	3.41726327530598e-09\\
27	1.54099218723512e-09\\
28	5.75918698690828e-10\\
29	5.14958757155974e-10\\
30	1.3812137168487e-10\\
31	8.91013328736501e-11\\
32	8.03494284027979e-11\\
33	8.03494284027979e-11\\
34	6.711875917713e-11\\
35	6.02431602838472e-11\\
36	6.02431602838472e-11\\
37	4.04756489164418e-11\\
38	4.01296304137559e-11\\
39	4.01296304137559e-11\\
40	2.19959779719275e-11\\
41	2.19959779719275e-11\\
42	2.00071031971053e-11\\
43	1.40073776884031e-11\\
44	1.40073776884031e-11\\
45	1.31040161778253e-11\\
46	1.31040161778253e-11\\
47	1.16795878918653e-11\\
48	9.29797301059748e-12\\
49	5.66697562663989e-12\\
50	4.28317753673888e-12\\
51	3.91142609863048e-12\\
52	1.97872165905511e-12\\
53	1.97872165905511e-12\\
54	4.89135806859307e-13\\
55	2.17644487929198e-13\\
56	5.74101921226992e-14\\
57	4.58360203867954e-14\\
58	1.92831529445971e-14\\
59	5.96421345135598e-15\\
60	1.02208646497557e-15\\
};
\addlegendentry{Gen. Gramians}

\addplot [color=green, line width=2.0pt]
  table[row sep=crcr]{%
1	1\\
2	0.160426720319961\\
3	0.132855106540704\\
4	0.0265090133373836\\
5	0.00545565817729584\\
6	4.64294337201604e-05\\
7	2.42829606436996e-05\\
8	1.81213682668579e-05\\
9	3.91657689168934e-07\\
10	3.0886008340163e-07\\
11	5.10890695417003e-09\\
12	1.45182111126207e-09\\
13	1.1417742151148e-09\\
14	5.37153226346617e-10\\
15	5.37153226346617e-10\\
16	4.28181191711959e-10\\
17	4.0769126178212e-10\\
18	2.64725032016027e-10\\
19	2.16112368665511e-10\\
20	2.16112368665511e-10\\
21	2.08422591491333e-10\\
22	2.08422591491333e-10\\
23	1.50510388337928e-10\\
24	1.27068416725124e-10\\
25	1.1959485940933e-10\\
26	1.1959485940933e-10\\
27	1.0858894720729e-10\\
28	9.6212193189738e-11\\
29	9.09416289443542e-11\\
30	8.46165517940659e-11\\
31	8.46165517940659e-11\\
32	6.87393515760867e-11\\
33	6.87393515760867e-11\\
34	5.78371350286347e-11\\
35	5.67468941878876e-11\\
36	5.67468941878876e-11\\
37	4.11273160787977e-11\\
38	3.44827016368728e-11\\
39	3.44827016368728e-11\\
40	3.34959096932959e-11\\
41	3.34959096932959e-11\\
42	2.38199474053759e-11\\
43	1.5974346472953e-11\\
44	1.5974346472953e-11\\
45	6.9706560581846e-12\\
46	6.9706560581846e-12\\
47	6.79465337659743e-12\\
48	3.81347950183361e-12\\
49	9.76716928922124e-13\\
50	7.84604640123346e-13\\
51	5.46950225346181e-13\\
52	1.46061556450253e-13\\
53	6.55996568757892e-14\\
54	6.55996568757892e-14\\
55	2.67158872839874e-14\\
56	2.67158872839874e-14\\
57	2.02778771994864e-14\\
58	3.34113699151129e-15\\
59	3.34113699151129e-15\\
60	2.7920471411484e-15\\
};
\addlegendentry{Aver. Gramians}

\end{axis}
\end{tikzpicture}%	
		\caption{Hankel singular values decay corresponding to the Generalized Gramians (red  line) and Averaged Gramians (green line).}
	\end{center}
\end{figure}

Choose the truncation order $r= 15$ for the reduced LSS using both methods.  We compare the time domain response of the original LSS against the ones corresponding to the two reduced models. For this, we use  as the control input, $u(t) = 10 \sin(30t)e^{-t} $ and as the switching signal
\[ q(t) = \left\{\begin{array}{rl}
1,&~t\in[0,0.5]\cup[2,2.5]\cup[4,5]\cup[5.5, 6],   \\  
2,&~t\in[0.5,2]\cup[2.5,4]\cup [5,5.5]. 
\end{array}  \right.\]
The results are represented in Figure 2. The absolute errors are represented in Figure 3.

 \begin{figure}[H]\label{fig:AbsError}
	\setlength{\wex}{0.8\textwidth}
	\setlength{\hex}{4cm}
	\begin{center}
		\input{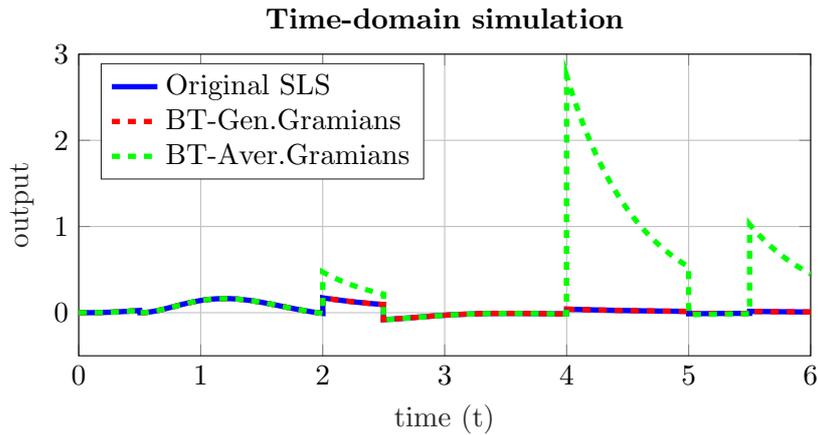}	
		\caption{Output corresponding to the time domain simulation of the original model (blue line), generalized Gramian ROM (red line) and averaged Gramian ROM.}
	\end{center}
\end{figure}

\begin{figure}[H]
	\setlength{\wex}{0.8\textwidth}
	\setlength{\hex}{5cm}
	\begin{center}
	\input{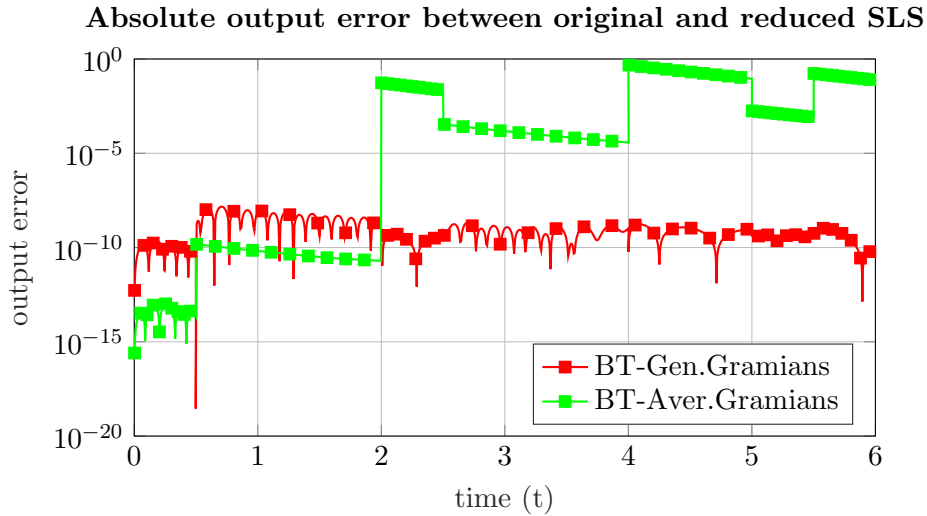}	
	\caption{Output absolute error between the original model and the reduced ones (Generalized Gramians: red line, Averaged Gramians: Green line).}
\end{center}
\end{figure}

By inspecting the time-domain error between the original response and the  two reduced models (Figure~3), we observe that  the new proposed method generally produces better results. In addition, notice that $\|u\|_{L_2} = 10 \sqrt{\frac{225}{901}} \approx 4.997$, so that the error bound of Theorem \ref{theo:ErrorBound} can be computed as $2 \left(\sum_{k=r+1}^n \sigma_k\right) \|u\|_{L_2}  = 5.033\mathrm{e}{-05}$. By numerical computing of the $L_2$-norm of the error between the original and the reduced order model, one obtain $7.00\mathrm{e}{-09}$ for the system obtained using the proposed method and $ 0.2715$ for the one obtained using average Gramians. As a conclusion, the bounds derived in Theorem \ref{theo:ErrorBound} are satisfied for the proposed method.

\section{Conclusion}
\label{section:conclusion}

In this paper, we have proposed new reachability and observability Gramians for LSS, satisfying generalized Lyapunov equations. Also, we prove that those Gramians encode the reachable and observable sets of an LSS. Based on these Gramians, a balancing-type procedure is proposed enabling to find global projectors $V$ and $W$ to construct a reduced order model. Also, under certain assumptions, the proposed procedure is shown to preserve quadratic stability and to have an error bounds.  However,  since those assumptions are difficult to be checked in the large-scale context, one possible future research axis is to  find whether weaker assumptions exist such that similar results are also valid. Finally, the results are illustrated by some numerical examples. 

\section*{Acknowledgement}
The authors thank the German Research Foundation for funding this work within the CRC/TR 96.
\appendix

\section{Appendix: Proof of Theorem \ref{theo:GramCond}}
In what follows, we only prove that $\Rs= \range \Pg$. The proof that $\Os=\range \Qg$ follows analogously. 

The complete proof of Theorem \ref{theo:GramCond} requires the following propositions. 
\begin{prop}[Theorem 2.2, \cite{dullerud2013course}]\label{prop:LyapLTI}  Let $A \in \R^{n \times n}$ be a Hurwitz matrix and $B \in \R^{n\times m}$. Then, the Lyapunov equation 
	\begin{equation}\label{eq:LyapLTI}
	A\Pg+\Pg A^T +BB^T = 0 
	\end{equation}  has unique symmetric positive semidefinite solution $\Pg$  satisfying
	\[\range{\Pg} = \sum_{l=0}^{\infty}A^l\range B. \] 
\end{prop}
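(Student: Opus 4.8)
The plan is to produce the solution in closed form and then recover its range from its kernel, exploiting that $\Pg$ is symmetric so that $\range{\Pg}=(\ker\Pg)^\perp$. First I would set
\[
\Pg := \int_0^{\infty} e^{At}BB^T e^{A^Tt}\,dt ,
\]
and check that this integral is well defined: since $A$ is Hurwitz there are constants $\beta>0$, $\alpha>0$ with $\|e^{At}\|\le\beta e^{-\alpha t}$, so the integrand is bounded in norm by $\beta^2\|BB^T\|\,e^{-2\alpha t}$ and the integral converges absolutely. Symmetry and positive semidefiniteness are immediate because the integrand equals $(e^{At}B)(e^{At}B)^T\succeq 0$. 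That $\Pg$ solves \eqref{eq:LyapLTI} follows from $\frac{d}{dt}\big(e^{At}BB^Te^{A^Tt}\big)=A\,e^{At}BB^Te^{A^Tt}+e^{At}BB^Te^{A^Tt}A^T$ together with the fundamental theorem of calculus, the boundary term at infinity vanishing by the exponential bound. Uniqueness is a separate point: the Lyapunov operator $X\mapsto AX+XA^T$ has eigenvalues $\lambda_i(A)+\lambda_j(A)$, all with negative real part and hence nonzero, so it is invertible and the solution of \eqref{eq:LyapLTI} is unique among all matrices.

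Next I would set $\mathcal{W}:=\sum_{l=0}^{\infty}A^l\range{B}$ and record two facts. By Cayley--Hamilton the sum stabilizes at $l=n-1$, so $\mathcal{W}$ is a genuine finite-dimensional subspace, namely the smallest $A$-invariant subspace containing $\range{B}$. Taking orthogonal complements gives the dual description $\mathcal{W}^{\perp}=\{x\in\R^n : B^T(A^T)^l x=0 \text{ for all } l\ge 0\}$, which is the form I will match against $\ker\Pg$.

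The heart of the argument is the identity $\ker\Pg=\mathcal{W}^{\perp}$. For $x\in\R^n$ one has $x^T\Pg x=\int_0^{\infty}\|B^Te^{A^Tt}x\|^2\,dt$, and since $\Pg\succeq 0$ the membership $x\in\ker\Pg$ is equivalent to $x^T\Pg x=0$, hence by continuity of the integrand to $B^Te^{A^Tt}x=0$ for all $t\ge 0$. Expanding $e^{A^Tt}=\sum_{l\ge 0}\frac{t^l}{l!}(A^T)^l$ and differentiating this identity repeatedly at $t=0$ yields $B^T(A^T)^l x=0$ for every $l$, i.e. $x\in\mathcal{W}^\perp$; conversely these conditions make the power series vanish termwise, giving $B^Te^{A^Tt}x\equiv 0$ and thus $x\in\ker\Pg$. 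Taking orthogonal complements of $\ker\Pg=\mathcal{W}^\perp$ and using the symmetry of $\Pg$ gives $\range{\Pg}=(\ker\Pg)^\perp=\mathcal{W}$, which is the claim.

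The main obstacle I anticipate is the clean passage from the integral vanishing to the algebraic conditions: one must justify that $\int_0^\infty\|B^Te^{A^Tt}x\|^2\,dt=0$ forces the continuous integrand to vanish identically, and then extract $B^T(A^T)^lx=0$ either by differentiating the real-analytic map $t\mapsto B^Te^{A^Tt}x$ at the origin or by observing that a vanishing analytic function has vanishing Taylor coefficients. Everything else---convergence of the integral, symmetry, the solution-check, and uniqueness via invertibility of the Lyapunov operator---is routine.
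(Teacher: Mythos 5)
Your proof is correct. The paper does not prove this proposition at all---it is imported verbatim as Theorem 2.2 of \cite{dullerud2013course}---and your argument (the integral representation $\Pg=\int_0^\infty e^{At}BB^Te^{A^Tt}\,dt$, uniqueness via invertibility of the Lyapunov operator, the kernel identity $\ker\Pg=\mathcal{W}^\perp$ obtained from analyticity of $t\mapsto B^Te^{A^Tt}x$, and Cayley--Hamilton to truncate the sum) is precisely the standard proof underlying that cited result, so there is nothing substantive to compare.
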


\begin{prop}\label{prop:Lyap2} Let $A \in \R^{n \times n}$ be a Hurwitz matrix and $B_1, \dots, B_M \in \R^{n\times m}$. Then, the Lyapunov equation 
	\begin{equation}\label{eq:LyapLTI2}
	A\Pg+\Pg A^T +\sum_{j=1}^M B_jB_j^T = 0 
	\end{equation} has unique symmetric positive semidefinite $\Pg$ such that
	\[\range \Pg =  \sum_{l=0}^{\infty} \sum_{j=1}^MA^l \range{B_j} \] 
\end{prop}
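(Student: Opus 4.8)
The plan is to reduce the multi-input case of equation~\eqref{eq:LyapLTI2} to the single-input case already handled by Proposition~\ref{prop:LyapLTI}. The key observation is that the sum $\sum_{j=1}^M B_jB_j^T$ can be written compactly as $\mathcal{B}\mathcal{B}^T$ where $\mathcal{B} = \begin{bmatrix} B_1 & B_2 & \cdots & B_M \end{bmatrix} \in \R^{n \times Mm}$ is the horizontal concatenation of the input matrices. Indeed, block multiplication gives $\mathcal{B}\mathcal{B}^T = \sum_{j=1}^M B_jB_j^T$ directly. With this identification, equation~\eqref{eq:LyapLTI2} becomes exactly a single-input Lyapunov equation of the form~\eqref{eq:LyapLTI}, namely $A\Pg + \Pg A^T + \mathcal{B}\mathcal{B}^T = 0$, with $A$ Hurwitz.

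First I would apply Proposition~\ref{prop:LyapLTI} to this reformulated equation. Since $A$ is Hurwitz, that proposition guarantees a unique symmetric positive semidefinite solution $\Pg$ satisfying
\[
\range{\Pg} = \sum_{l=0}^{\infty} A^l \range{\mathcal{B}}.
\]
The remaining task is to identify $\range{\mathcal{B}}$ with the appropriate union of the ranges $\range{B_j}$. The natural claim is that $\range{\mathcal{B}} = \sum_{j=1}^M \range{B_j}$, which follows because a vector lies in the image of $\mathcal{B}$ if and only if it is a linear combination of the columns of the $B_j$; grouping these columns by block shows the column space of $\mathcal{B}$ is precisely the (Minkowski) sum of the individual column spaces. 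Substituting this into the range formula and using that $A^l$ distributes over a sum of subspaces (i.e. $A^l\left(\sum_j \range{B_j}\right) = \sum_j A^l\range{B_j}$, since matrix multiplication is linear) yields
\[
\range{\Pg} = \sum_{l=0}^{\infty} A^l \sum_{j=1}^M \range{B_j} = \sum_{l=0}^{\infty}\sum_{j=1}^M A^l\range{B_j},
\]
which is the desired conclusion.

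I do not expect a genuine obstacle here, as the result is essentially a bookkeeping reformulation of Proposition~\ref{prop:LyapLTI}. The only point requiring mild care is the interchange of the two sums and the claim that $A^l$ commutes with the subspace-sum operation; both are straightforward consequences of linearity, but I would state them explicitly to keep the argument rigorous. Uniqueness and positive semidefiniteness transfer immediately from Proposition~\ref{prop:LyapLTI} since the reformulated equation is literally an instance of~\eqref{eq:LyapLTI}. This proposition is the intended stepping stone toward Theorem~\ref{theo:GramCond}: it handles the $k=1$ term $\Pg_1$ in the infinite-sum decomposition, and the full proof will iterate a similar argument across the recursively defined $\Pg_k$ to recover the algebraic characterization of Theorem~\ref{theo:AlgCond}.
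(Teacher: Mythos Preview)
Your proof is correct. The paper takes a slightly different but equally short route: instead of concatenating the $B_j$ into a single block matrix $\mathcal{B}$, it splits the Lyapunov equation into $M$ separate equations $A\Pg_j + \Pg_j A^T + B_jB_j^T = 0$, observes that $\Pg = \sum_j \Pg_j$ by linearity, and then invokes the fact that for symmetric positive semidefinite summands one has $\range\bigl(\sum_j \Pg_j\bigr) = \sum_j \range{\Pg_j}$ before applying Proposition~\ref{prop:LyapLTI} to each $\Pg_j$. Your version replaces that PSD-range-sum step with the more elementary observation $\range{\mathcal{B}} = \sum_j \range{B_j}$, which is arguably cleaner since it is pure linear algebra on column spaces rather than a (true but unproved in the paper) fact about positive semidefinite matrices. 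Either way the argument is a one-line reduction to Proposition~\ref{prop:LyapLTI}, and your remarks about its role as the base case for the $\Pg_k$ recursion are exactly how the paper proceeds.
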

\begin{proof}
	Let $\Pg_j$ be the unique solution of
	\[ A\Pg_j +\Pg_j A^T +B_jB_j^T = 0. \]
	Then, by linearity, $\Pg = \sum_{j=1}^m \Pg_j$ is the solution of the Lyapunov equation \eqref{eq:LyapLTI2}. Moreover, since $\Pg_j$ is a symmetric positive semidefinite matrix,
	$\range \Pg = \sum_{j=1}^m \range{\Pg_j}$ and the result follows as a consequence of Proposition \ref{prop:LyapLTI}.
\end{proof}

\begin{prop}\label{prop:Lyap3} Let $A \in \R^{n \times n}$ be a Hurwitz matrix and $\Pg_{k-1}$ be a symmetric positive semidefinite matrix. Then, the Lyapunov equation 
	\begin{equation}\label{eq:LyapnovTruncated}
	A\Pg_{k}+\Pg_kA^T + \sum_{j=1}^M D_j\Pg_{k-1}D_j^T = 0 
	\end{equation}has unique symmetric positive semidefinite solution $\Pg_k$ such that
	\[\range{\Pg_k} = \sum_{l=0}^{\infty} \sum_{j=1}^M A^l D_j \range{\Pg_{k-1}}.   \] 
\end{prop}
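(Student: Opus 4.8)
The plan is to reduce the statement to Proposition \ref{prop:Lyap2} by exploiting the symmetric positive semidefinite structure of $\Pg_{k-1}$. First I would factor $\Pg_{k-1} = FF^T$ for some $F \in \R^{n \times s}$, which exists precisely because $\Pg_{k-1}$ is symmetric positive semidefinite (e.g.\ via a symmetric square root or a Cholesky-type factorization). With this factorization the forcing term rewrites as
\[
\sum_{j=1}^M D_j \Pg_{k-1} D_j^T = \sum_{j=1}^M (D_j F)(D_j F)^T,
\]
so that \eqref{eq:LyapnovTruncated} becomes a Lyapunov equation of exactly the form \eqref{eq:LyapLTI2} with the matrices $B_j := D_j F$.

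Next I would invoke Proposition \ref{prop:Lyap2} directly. Since $A$ is Hurwitz, it yields a unique symmetric positive semidefinite solution $\Pg_k$ of \eqref{eq:LyapnovTruncated}, together with the range formula
\[
\range{\Pg_k} = \sum_{l=0}^{\infty} \sum_{j=1}^M A^l \range{D_j F}.
\]
This already settles existence, uniqueness, and positive semidefiniteness; what remains is purely a matter of identifying $\range{D_j F}$ in terms of $\range{\Pg_{k-1}}$.

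The final step is to translate $\range{D_j F}$ back into the quantities appearing in the statement, using two standard linear-algebra identities. For a symmetric positive semidefinite matrix one has $\range{F} = \range{FF^T} = \range{\Pg_{k-1}}$, which follows from $\ker(FF^T) = \ker{F^T}$ (if $FF^T x = 0$ then $\|F^T x\|^2 = x^T FF^T x = 0$, hence $F^T x = 0$) together with $\range{M} = \ker{M^T}^{\perp}$ applied to $M = FF^T$. Secondly, for any matrices $\range{D_j F} = D_j \range{F}$, since the image of the product is the image under $D_j$ of the image of $F$. Combining these gives $\range{D_j F} = D_j \range{\Pg_{k-1}}$, and substituting into the formula above produces the claimed expression
\[
\range{\Pg_k} = \sum_{l=0}^{\infty} \sum_{j=1}^M A^l D_j \range{\Pg_{k-1}}.
\]

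I expect the only delicate point to be the bookkeeping of these range identities --- in particular verifying $\range{FF^T} = \range{F}$ in the symmetric case --- rather than any substantial analysis, since the existence, uniqueness, and convergence content is inherited wholesale from Proposition \ref{prop:Lyap2}. One should also note that the column dimension $s$ of $F$ may differ from $m$, but this causes no difficulty because Proposition \ref{prop:Lyap2} holds for $B_j$ of arbitrary column dimension.
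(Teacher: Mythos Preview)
Your proposal is correct and follows essentially the same approach as the paper: factor $\Pg_{k-1} = L_{k-1}L_{k-1}^T$, set $\tilde B_j = D_j L_{k-1}$, and apply Proposition~\ref{prop:Lyap2} together with $\range{L_{k-1}} = \range{\Pg_{k-1}}$. If anything, you are more careful than the paper in justifying the range identities $\range{FF^T} = \range{F}$ and $\range{D_jF} = D_j\range{F}$, which the paper simply asserts.
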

\begin{proof} 
	Since $\Pg_{k-1}$ is symmetric positive semidefinite, it has a Chosleky  decomposition given by 
	$ \Pg_{k-1} = L_{k-1}L_{k-1}^T$. Then, equation \eqref{eq:LyapnovTruncated} can be rewritten as
	\[
	A\Pg_{k}+\Pg_kA^T + \sum_{j=1}^M D_jL_{k-1}L_{k-1}^TD_j^T = 0 
	\]
	If we rewrite $\tilde B_j = D_jL_{k-1}$, by applying Propostion \ref{prop:Lyap2} and using the fact that $\range{L_{k-1}} =\range{\Pg_{k-1}}$, the result follows.	
\end{proof}

\begin{prop}\label{prop:Lyap4} Let $A \in \R^{n \times n}$ be a Hurwitz matrix. Suppose $\Pg$ is the unique solution of 
	\[A\Pg + \Pg A^T + \ds\sum_{j=1}^{M} \bigg(D_j\Pg D_j^T+  B_jB_j^T \bigg) = 0. \]
	To simplify the notation, let us denote $A = D_{M+1}$. Then, 
	\[ \range{\Pg} =\sum_{k=1}^{\infty}\left(\sum_{\substack{
			i_0, \dots, i_{k} \in \Omega \cup \{M+1\} \\
			j_1, \dots, j_{k}\in \mathbb{N}}} D_{i_{k}}^{j_{k}}\dots D_{i_1}^{j_1}\range{B_{i_0}}\right).  \]
\end{prop}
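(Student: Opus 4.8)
The plan is to combine the series decomposition $\Pg = \sum_{k=1}^{\infty}\Pg_k$ into symmetric positive semidefinite summands (guaranteed under the hypotheses by Theorem~\ref{theo:GenLyap} and the decomposition recorded just after it) with the explicit range formulas of Propositions~\ref{prop:Lyap2} and~\ref{prop:Lyap3}, and then to carry out a purely combinatorial identification of the resulting family of matrix words with the double sum in the statement.

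First I would reduce the claim to a statement about the individual summands by showing
\[
\range\Pg \;=\; \sum_{k=1}^{\infty}\range\Pg_k .
\]
The cleanest route is through kernels. For any $x$ one has $x^{T}\Pg x = \sum_{k\ge 1}x^{T}\Pg_k x$, a convergent series of nonnegative terms; hence $x\in\ker\Pg$ forces $x^{T}\Pg_k x = 0$, and therefore $\Pg_k x = 0$, for every $k$, while conversely $x\in\bigcap_k\ker\Pg_k$ trivially lies in $\ker\Pg$. This gives $\ker\Pg=\bigcap_k\ker\Pg_k$, and taking orthogonal complements in $\R^n$ yields the displayed identity; since the increasing chain $\sum_{k\le N}\range\Pg_k$ of subspaces stabilizes, no closure is needed. (As a consistency check, $\Pg-\sum_{k\le N}\Pg_k\succeq 0$ already shows $\sum_{k\le N}\range\Pg_k\subseteq\range\Pg$.)

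Next I would compute $\range\Pg_k$ by induction. The base case $\range\Pg_1=\sum_{l\ge 0}\sum_{j\in\Omega}A^{l}\range{B_j}$ is Proposition~\ref{prop:Lyap2}, and the inductive step $\range\Pg_k=\sum_{l\ge 0}\sum_{j\in\Omega}A^{l}D_j\range{\Pg_{k-1}}$ is Proposition~\ref{prop:Lyap3} applied to the positive semidefinite matrix $\Pg_{k-1}$. Unwinding the recursion shows that $\range\Pg_k$ is the span of all alternating words
\[
A^{l_k}D_{j_{k-1}}A^{l_{k-1}}\cdots D_{j_1}A^{l_1}\range{B_{j_0}},
\qquad l_1,\dots,l_k\ge 0,\ j_0,\dots,j_{k-1}\in\Omega,
\]
namely $k-1$ single factors $D_j$ ($j\in\Omega$) interleaved with powers of $A$.

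The substantive step, which I expect to be the main obstacle, is the combinatorial identification of $\sum_{k\ge 1}\range\Pg_k$ with the right-hand side of the statement. Writing $A=D_{M+1}$, every alternating word above is literally a monomial $D_{i_m}^{p_m}\cdots D_{i_1}^{p_1}$ in the $D$'s, so the inclusion $\sum_k\range\Pg_k\subseteq(\text{right-hand side})$ is immediate. For the reverse inclusion the key observation is that permitting the exponents $l_i$ to vanish, so that $A^{0}=I$, lets adjacent $D$-factors coalesce; consequently the alternating words already realize \emph{every} finite product of matrices from $\{A,D_1,\dots,D_M\}$, while dually each monomial $D_i^{p}$ on the right expands into such a product. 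Thus both index parametrizations describe exactly the monoid of finite products of $\{A,D_1,\dots,D_M\}$ acting on the subspaces $\range{B_{j_0}}$, and the two sums of subspaces coincide. Here I would flag the bookkeeping of matching powers $D_j^{p}$ against repeated single factors and of inserting or suppressing the identity $A^{0}$, and I would note that the index value $i_0=M+1$ contributes nothing under the convention $B_{M+1}=0$ (equivalently, $i_0$ may be restricted to $\Omega$), so that no undefined symbol $B_{M+1}$ actually enters. Combining this identification with the reduction of the first step gives $\range\Pg=\sum_k\range\Pg_k$ equal to the claimed expression.
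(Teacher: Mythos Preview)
Your proof is correct and follows essentially the same route as the paper: decompose $\Pg=\sum_{k\ge1}\Pg_k$, use positive semidefiniteness to conclude $\range\Pg=\sum_k\range\Pg_k$, and then apply Propositions~\ref{prop:Lyap2} and~\ref{prop:Lyap3} recursively. You have simply filled in details the paper leaves implicit---the kernel argument for the range-sum identity, the explicit unwinding into alternating words, the combinatorial matching via $A^0=I$, and the handling of the stray index $i_0=M+1$---all of which are sound.
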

\begin{proof}
	As stated in Section 2, $\Pg = \sum_{k=1}^{ \infty} \Pg_k$. Hence, since $\Pg_k$ are symmetric positive semidefinite matrices for all $k = 1, 2, \dots$, we must have
	\[ \range{\Pg} = \sum_{k=1}^{ \infty} \range{\Pg_k}. \] 	
	The result follows from Proposition \ref{prop:Lyap2} and by recurrence using Proposition \ref{prop:Lyap3}. 
\end{proof}

Finally, Theorem \ref{theo:GramCond} follows from Proposition~\ref{prop:Lyap4} by  the fact that \[A_j \in \sspan{D_1, \dots, D_M, A}, \forall j\in \Omega\] and 
\[D_j \in \sspan{A_1, \dots, A_M}, \forall j\in \Omega \cup \{M+1\},\]
so that the algebraic conditions given in Theorem \ref{theo:AlgCond} are equivalent to the algebraic condition given in Proposition \ref{prop:Lyap4}.

\bibliography{igorBiblio,mor}

%%%%%%%%%%%%%%%%%%%%%%%%%%%%%%%%%%%%%%%%%%%%%%%%%%%%%%%%%%%%%%%%%%%%%%%%%%%%%%%%

\end{document}